\def\@captype{figure}
\numberwithin{equation}{section}
\newtheorem{theorem}{Theorem}[section]
\newtheorem{lemma}[theorem]{Lemma}
\newtheorem{proposition}[theorem]{Proposition}
\newtheorem{remark}[theorem]{Remark}
\newtheorem{definition}[theorem]{Definition}
\newtheorem{conjecture}[theorem]{Conjecture}
\newenvironment{hproof}{%
  \proof}{\endproof}
\newcommand{\eee}{\mathrm{e}}
\newcommand{\ddd}{\mathrm{d}}
\newcommand{\mi}{\mathrm{i}}
\title{Two-community noisy Kuramoto model}
\author{

J.\ M.\ Meylahn
\footnotemark[1]
}
\begin{document}
\maketitle
\begin{abstract}
We study the noisy Kuramoto model for two interacting communities of oscillators, where we allow the interaction in and between communities to be positive or negative (but not zero). We find that, in the thermodynamic limit where the size of the two communities tends to infinity, this model exhibits non-symmetric synchronized solutions that bifurcate from the symmetric synchronized solution corresponding to the one-community noisy Kuramoto model, even in the case where the phase difference between the communities is zero and the interaction strengths are symmetric. The solutions are given by fixed points of a dynamical system. We find a critical condition for existence of a bifurcation line, as well as a pair of equations  determining the bifurcation line as a function of the interaction strengths. Using the latter we are able to classify the types of solutions that are possible and thereby identify the phase diagram of the system. We also analyze properties of the bifurcation line in the phase diagram and its derivatives, calculate the asymptotics, and analyze the synchronization level on the bifurcation line. Part of the proofs are numerically assisted. Lastly, we present some simulations illustrating the stability of the various solutions as well as the possible transitions between these solutions.

\medskip\noindent
{\it Key words and phrases.} Two-community network, phase oscillators, noisy Kuramoto model, McKean-Vlasov equations, phase diagram, bifurcation line.

\medskip\noindent
{\it Acknowledgment.} 
This work is supported by NWO Gravitation Grant 024.002.003--NETWORKS. The author is grateful to F.\ den Hollander for discussions and detailed comments, and to C. da Costa and E.\ Verbitskiy for fruitful input. 
\end{abstract}

\newpage

\section{Background and motivation}
The motivation for studying the two-community noisy Kuramoto model is two-fold. On the one hand, the suprachiasmatic nucleus (SCN) in the brain of mammals is responsible for biological time-keeping and consists of two communities of cells that exhibit synchronization \cite{W10}. On the other hand, there are recent studies of interacting particle systems with community structure, that reveal vast richness in behavior \cite{GC08, BR16, DP18, FC14}. The noisy Kuramoto model consists of a collection of oscillators with a mean-field interaction that favors alignment subject to external noisy \cite{SA88}.

The SCN is a cluster of neurons responsible for dictating the rhythm of bodily functions, most significantly the sleep-cycle. Malfunctioning of the SCN leads to a variety of health problems, ranging from epilepsy to narcolepsy. Remarkably, the network structure of the cluster is similar in all mammals, with the universal feature that it is split into two communities. In humans each cluster has a size of about $10^{4}$ neurons. It seems that this two-community structure is ideal, both for the robustness of the rhythm of the cluster not to be disturbed by unusual light inputs, as well as for the cluster to be adaptable enough to re-synchronize when there is a change in the light-dark cycle it is exposed to. As we will see below, this is reflected by the mathematical properties of the two-community noisy Kuramoto model, for which the interplay between positive and negative interactions introduces new features. The negative interaction, studied before in \cite{HS11}, \cite{HS12}, seems to play a key role in the appearance of a negative correlation between the neurons in the two communities in the SCN, resulting in new emergent behavior such as phase splitting \cite{PI08}. 

In the mathematics literature there have been recent studies on bipartite mean-field spin systems \cite{GC08}, as well as on the Ising block model \cite{BR16} and the asymmetric Curie-Weiss model \cite{DP18}, \cite{FC14}, where the splitting into two communities introduces interesting features, for example, the appearance of periodic orbits. These are discrete models which makes them hard to analyze. What makes the Kuramoto model considered here hard to analyze is that the interaction between phase oscillators in the Kuramoto model is \emph{non-linear}.

Also in \cite{SO15} the authors consider the two-community noisy Kuramoto model. They find an intricate phase diagram, with the system being able to take on a variety of different states. This confirms the observation that a simple modification in the network structure can greatly increase the complexity of the system. The results in \cite{SO15}, however, depend strongly on a Gaussian approximation for the phase distribution in each community (explained in \cite{SO13}), which allows for a reduction of the  dynamics to a low-dimensional setting. In this paper we do not rely on any such approximation.  

We have recently studied the noisy Kuramoto model on the \emph{hierarchical lattice} \cite{GA18}, finding conditions for synchronization either to propagate to all levels in the hierarchy or to vanish at a finite level. This analysis came about by writing down \emph{renormalized evolution equations} for the average phases in a block-community at a given hierarchical level in the hierarchical mean-field limit. In the present paper we allow for negative interactions across the communities, a situation we did not consider in the hierarchical model. 

In Section \ref{sec:basicproperties} we introduce the noisy Kuramoto model on the two-community network (see Fig.~\ref{fig:twocomnet}) and show that the empirical measures defined for each community evolve according to a McKean-Vlasov equation in the thermodynamic limit. We also give the steady-state solutions to these McKean-Vlasov equations and conjecture which values the phase difference between the average phases of the two communities can take in the steady state. In Section \ref{sec:critcon} we present results on the critical condition for synchronization in the case of symmetric interaction strengths and equal community sizes, first without disorder and then with disorder. By disorder we mean that the natural frequencies of the oscillators are taken from a distribution while without disorder means that all oscillators are assumed to have a natural frequency of zero. In Section \ref{sec:nonsym} we prove the conjecture from the previous section for a simplified version of the model where we take the interaction strengths to be symmetric and prove the existence of non-symmetric solutions in this case. Here symmetric solutions are solutions in which the synchronization level is the same in both communities while non-symmetric solutions are solutions where the synchronization level are non-zero and not the same in both communities. We also characterize the bifurcation line at which the non-symmetric solutions split off from the symmetric solutions, and expound a collection of results on the (asymptotic) properties of the bifurcation line in the phase diagram. Furthermore we analyze the synchronization level along the bifurcation line. Some of the proofs in Section \ref{sec:nonsym} are numerically assisted. Finally, in Section \ref{sec:simulation} we present some simulations illustrating the stability of the various solutions as well as the possible transitions between various steady-states.

\section{Basic properties}
\label{sec:basicproperties}
In Section \ref{sec:model} we define the model, in Section \ref{sec:mckean} we take the McKean-Vlasov limit, and in Section \ref{sec:stationarysol} we identify the stationary solutions.

\subsection{Model}
\label{sec:model}
We consider two communities of oscillators of size $N_{1}$ and $N_{2}$ with internal mean-field interactions of strength $\frac{K_{1}}{N_{1}}$ and $\frac{K_{2}}{N_{2}}$, respectively. In addition, the oscillators in community 1 experience a mean-field interaction with the oscillators in community 2 of strength $\frac{L_{1}}{N_{2}}$ and the oscillators in community 2 experience a mean-field interaction of strength $\frac{L_{2}}{N_{1}}$ with the oscillators in community 1. Here we will take $K_{1}, K_{2}\in \mathbb{R}$ to be positive and $L_{1}, L_{2}\in \mathbb{R}\setminus \{0\}$.

\begin{figure}
\centering
\includegraphics[scale=1.4]{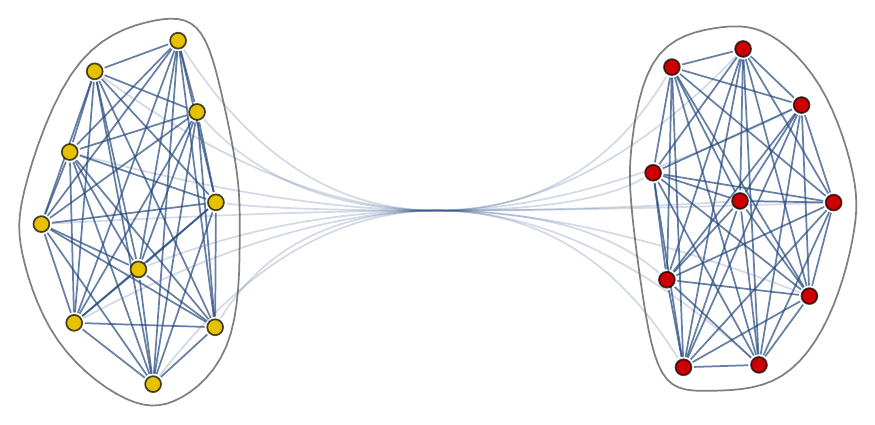}
\caption{Schematic picture of the two-community network, with community 1 consisting of $N_{1}$ yellow nodes and community 2 of $N_{2}$ red nodes. The interaction between yellow nodes has strength $K_{1}$, between red nodes strength $K_{2}$. Yellow nodes feel red nodes at strength $L_{1}$ and red nodes feel yellow nodes at strength $L_{2}$. Not all the interaction links between the communities are drawn.}
\label{fig:twocomnet}
\end{figure}

\begin{definition}[\textbf{Two-community noisy Kuramoto model}]
The phase angles of the oscillators in community 1 are denoted by $\theta_{1, i}$, $i=1, \cdots , N_{1}$, and their evolution on $\mathbb{S}=\mathbb{R}/2\pi$ is governed by the SDE 
\begin{eqnarray}
\label{eq:def2KM1}
\ddd \theta_{1, i}(t) =& \omega_{1, i}\ddd t + \frac{K_{1}}{N_{1}+N_{2}}\sum_{k=1}^{N_{1}}\sin(\theta_{1, k}(t) - \theta_{1, i}(t))\ddd t\nonumber\\
&+ \frac{L_{1}}{N_{1}+N_{2}}\sum_{l=1}^{N_{2}}\sin(\theta_{2, l}(t) - \theta_{1, i}(t))\ddd t + \sqrt{D}\ddd W_{1, i}(t).
\end{eqnarray}
The phase angles of the oscillators in community 2 are denoted by $\theta_{2, j}$,  $j=1, \cdots , N_{2}$, and their evolution on $\mathbb{S}=\mathbb{R}/2\pi$ is governed by the SDE 
\begin{eqnarray}
\label{eq:def2KM2}
\ddd \theta_{2, j}(t) =& \omega_{2, j}\ddd t + \frac{K_{2}}{N_{1}+N_{2}}\sum_{l=1}^{N_{2}}\sin(\theta_{2, l}(t) - \theta_{2, j}(t))\ddd t\nonumber\\
&+ \frac{L_{2}}{N_{1}+N_{2}}\sum_{k=1}^{N_{1}}\sin(\theta_{1, k}(t) - \theta_{2, j}(t))\ddd t + \sqrt{D}\ddd W_{2, j}(t).
\end{eqnarray}
Here, the natural frequencies $\omega_{1, i}$, $i=1, \ldots, N_{1}$, of the oscillators in community 1 are drawn independently from a probability distribution $\mu_{1}(\ddd \omega)$ on $\mathbb{R}$ and the natural frequencies $\omega_{2, i}$, $i=1, \ldots, N_{2}$, of the oscillators in community 2 are drawn independently from a probability distribution  $\mu_{2}(\ddd\omega)$ on $\mathbb{R}$, while $D>0$ is the noise strength, and $\big(W_{1, i}(t)\big)_{t\geq 0}, i=1, \ldots, N_{1}$, and $\big(W_{2, j}(t)\big)_{t\geq 0}, j=1, \ldots,N_{2}$, are independent  standard Brownian motions. For simplicity we take $\mu_{1}, \mu_{2}$ to be symmetric and have the same mean which we can assume to be zero without loss of generality.
\end{definition}

The model can alternatively be defined in terms of an interaction Hamiltonian and a weighted adjacency matrix, given by
\begin{align}
H_{N}(\theta_{1}, \ldots, \theta_{N}) = -\frac{1}{N}\sum_{i=1}^{N}\sum_{j=1}^{N}A_{i, j}\cos(\theta_{j}(t) - \theta_{i}(t)) + \sum_{i=1}^{N}\theta_{i}(t)\omega_{i}
\end{align}
with
\begin{align}
A := (A_{i, j})_{i, j=1, \ldots, N}= 
\begin{bmatrix}
    0 & K_{1}  & \dots  & K_{1} & L_{1} & L_{1} & \dots  & L_{1}  \\
    K_{1} & 0  & \dots  & K_{1} & L_{1} & L_{1}  & \dots  & L_{1}\\
    \vdots & \vdots  & \ddots & \vdots & L_{1} & L_{1} & \dots  & L_{1}\\
    K_{1} & K_{1} & \dots  & 0 & L_{1} & L_{1} & \dots  & L_{1}\\
    L_{2} & L_{2} & \dots  & L_{2} & 0 & K_{2} & \dots  & K_{2}\\
    L_{2} & L_{2} & \dots  & L_{2} & K_{2} & 0 & \dots  & K_{2}\\
    L_{2} & L_{2} & \dots  & L_{2} & \vdots & \vdots  & \ddots & \vdots \\
    L_{2} & L_{2} & \dots  & L_{2} & K_{2} & K_{2} & \dots  & 0
\end{bmatrix}
=
\begin{bmatrix}
    K_{1}\textbf{1}_{*} & L_{1}\textbf{1}\\
    L_{2}\textbf{1} & K_{2}\textbf{1}_{*}
\end{bmatrix},
\end{align}
where $\textbf{1}=$ all 1's and $\textbf{1}_{*}=$ all 1's, except for 0's on the diagonal. The model then reads 
\begin{align}
\ddd\theta_{i}(t) = \partial_{\theta_{i}}H_{N}(\theta_{1}, \ldots, \theta_{N})\ddd t +D\ddd W_{i}(t), \quad i=1, \ldots, N,
\end{align}
where $N=N_{1}+N_{2}$. Here, we identify phase angle $\theta_{i}$ with the oscillators in community 1 when $i\in [1, N_{1}]$ and with the oscillators in community 2 when $i\in(N_{1}, N_{1}+N_{2}]$. This representation of the model illustrates the network structure of the underlying interactions and in principle the adjacency matrix can be replaced by a matrix arising from a random graph model and has recently been addressed by a number of authors \cite{Bhamidi2018, Coppini2018, Delattre2016, Lucon2018, Oliveira2018}. This however significantly complicates the calculations since the interactions are no longer expressible in terms of a closed function of the empirical measure. The representation via the Hamiltonian may also provide a method for studying the stability properties of the stationary states.

The following \emph{order parameters} allow us to monitor the dynamics in each community:
\begin{eqnarray}
r_{1, N_{1}}(t) \eee^{\mi \psi_{1, N_{1}}(t)} =& \frac{1}{N_{1}}\sum_{k =1}^{N_{1}}\eee^{\mi\theta_{1, k}(t)},\\
r_{2, N_{2}}(t) \eee^{\mi \psi_{2, N_{2}}(t)} =& \frac{1}{N_{2}}\sum_{l =1}^{N_{2}}\eee^{\mi\theta_{2, l}(t)},
\end{eqnarray}
where $r_{1, N_{1}}(t)\in [0, 1]$ and $r_{2, N_{2}}(t)\in [0, 1]$ represent the \emph{synchronization levels}, and $\psi_{1, N_{1}}(t)$ and $\psi_{2, N_{2}}(t)$ represent the \emph{average phases}, in community 1 and 2, respectively. Using these order parameters, we can rewrite the evolution equations in \eqref{eq:def2KM1} and \eqref{eq:def2KM2} as
\begin{eqnarray}
\label{eq:rewritenevolution1}
\ddd \theta_{1, i}(t) =& \omega_{1, i}\ddd t + \frac{K_{1}N_{1}}{N_{1}+N_{2}}r_{1, N_{1}}(t)\sin(\psi_{1, N_{1}}(t) - \theta_{1, i}(t))\ddd t\nonumber\\
&+ \frac{L_{1}N_{2}}{N_{1}+N_{2}}r_{2, N_{2}}(t)\sin(\psi_{2, N_{2}}(t) - \theta_{1, i}(t))\ddd t + \sqrt{D}\ddd W_{1, i}(t)
\end{eqnarray}
and
\begin{eqnarray}
\label{eq:rewritenevolution2}
\ddd \theta_{2, j}(t) =& \omega_{2, j}\ddd t + \frac{K_{2}N_{2}}{N_{1}+N_{2}}r_{2, N_{2}}(t)\sin(\psi_{2, N_{2}}(t) - \theta_{2, j}(t))\ddd t\nonumber\\
&+ \frac{L_{2}N_{1}}{N_{1}+N_{2}}r_{1, N_{1}}(t)\sin(\psi_{1, N_{1}}(t)- \theta_{2, j}(t))\ddd t + \sqrt{D}\ddd W_{2, j}(t).
\end{eqnarray}

\subsection{McKean-Vlasov limit}
\label{sec:mckean}

We assume that the sizes of the communities are related to one another by setting $N_{1} = \alpha_{1}N$ and $N_{2}=\alpha_{2}N$, $\alpha_{1} + \alpha_{2} = 1$. In the limit as $N\to\infty$, we expect the angle density of oscillators in each community to follow a \emph{McKean-Vlasov equation}. Define the empirical measure for each community $(\theta\in\mathbb{S}, \omega\in\mathbb{R})$:
\begin{align}
\nu_{N_{1}, t}(\ddd\theta, \ddd\omega)&:=\frac{1}{N_{1}}\sum_{i=1}^{N_{1}}\delta_{(\theta_{1, i}(t), \omega_{1,i})}(\ddd\theta, \ddd\omega),\\
\nu_{N_{2}, t}(\ddd\theta, \ddd\omega)&:=\frac{1}{N_{2}}\sum_{j=1}^{N_{2}}\delta_{(\theta_{2, j}(t), \omega_{2,j})}(\ddd\theta, \ddd\omega).
\end{align}
\begin{proposition}[\textbf{McKean-Vlasov limit}]
\label{prop:McKeanlimit}
In the limit as $N\to \infty$, the empirical measure $\nu_{N_{1}, t}(\ddd\theta, \ddd\omega)$ converges to $\nu_{1, t}(\ddd\theta, \ddd\omega) = p_{1}(t;\theta,\omega)\,\ddd\theta\,\ddd\omega$, and the empirical measure $\nu_{N_{2}, t}(\ddd\theta, \ddd\omega)$ converges to $\nu_{2, t}(\ddd\theta, \ddd\omega) = p_{2}(t;\theta,\omega)\,\ddd\theta\,\ddd\omega$, where $p_{1}(t;, \theta, \omega)$ evolves according to
\begin{eqnarray}
\frac{\partial p_{1}(t; \theta, \omega)}{\partial t} = \frac{D}{2}\frac{\partial^{2}p_{1}(t; \theta, \omega)}{\partial \theta^{2}} - \frac{\partial}{\partial \theta} \big[v_{1}(t;\theta, \omega)p_{1}(t; \theta, \omega)\big]
\end{eqnarray}
with
\begin{eqnarray}
v_{1}(t;\theta, \omega) = \omega + \alpha_{1}K_{1}r_{1}(t)\sin(\psi_{1}(t) - \theta) + \alpha_{2}L_{1}r_{2}(t)\sin(\psi_{2}(t) - \theta),
\end{eqnarray}
and $p_{2}(t;\theta, \omega)$ evolves according to
\begin{eqnarray}
\frac{\partial p_{2}(t; \theta, \omega)}{\partial t} = \frac{D}{2}\frac{\partial^{2}p_{2}(t; \theta, \omega)}{\partial \theta^{2}} - \frac{\partial}{\partial \theta} \big[v_{2}(t;\theta, \omega)p_{2}(t; \theta, \omega)\big]
\end{eqnarray}
with
\begin{eqnarray}
v_{2}(t;\theta, \omega) = \omega + \alpha_{2}K_{2}r_{2}(t)\sin(\psi_{2}(t) - \theta) + \alpha_{1}L_{2}r_{1}(t)\sin(\psi_{1}(t) - \theta).
\end{eqnarray}
Here, $r_{1}(t), r_{2}(t), \psi_{1}(t)$ and $\psi_{2}(t)$ are defined by 
\begin{align}
r_{1}(t)\eee^{\mi \psi_{1}(t)}:=&\int_{\mathbb{S}\times\mathbb{R}}\nu_{1, t}(\ddd\theta, \ddd\omega)\,\eee^{\mi \theta},\\
r_{2}(t)\eee^{\mi \psi_{2}(t)}:=&\int_{\mathbb{S}\times\mathbb{R}}\nu_{2, t}(\ddd\theta, \ddd\omega)\,\eee^{\mi \theta}.
\end{align}
The convergence is in $\mathcal{C}([0, T], \mathcal{M}_{1}(\mathbb{S}\times \mathbb{R}))$ and takes place for any $T>0$. Here we consider annealed convergence with respect to the natural frequencies.
\end{proposition}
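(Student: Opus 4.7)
The plan is to adapt the classical propagation-of-chaos argument (in the spirit of Sznitman, and of Dai Pra--den Hollander for the one-community Kuramoto model) to the two-population setting. The key structural feature that makes this essentially standard is that the drift in \eqref{eq:rewritenevolution1}--\eqref{eq:rewritenevolution2} depends on the empirical measures only through the bounded, linear functionals $\int \eee^{\mi\theta}\,\nu_{N_k,t}(\ddd\theta,\ddd\omega)$, and the $\sin$-interactions are globally Lipschitz.

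First, I would introduce the candidate limit as a nonlinear McKean--Vlasov system. For $k\in\{1,2\}$, define $\bar\theta_k(t)$ as the unique strong solution of
\begin{align}
\ddd \bar\theta_k(t) = v_k\bigl(t; \bar\theta_k(t), \omega_k\bigr)\,\ddd t + \sqrt{D}\,\ddd W_k(t),
\end{align}
where $v_k$ is as in the statement and $r_k(t)\eee^{\mi\psi_k(t)} = \EE[\eee^{\mi\bar\theta_k(t)}]$, with $\omega_k\sim\mu_k$ independent of the Brownian motion. Existence and uniqueness of the coupled nonlinear SDE follows from a Picard iteration on the space $\mathcal{C}([0,T],\C^2)$ of paths $t\mapsto (z_1(t),z_2(t))$: the map that sends a pair of candidate order parameters to the order parameters computed from the resulting linear SDEs is a contraction (in sup-norm with a large enough weight $\eee^{-\lambda t}$), because $|\eee^{\mi x}-\eee^{\mi y}|\le |x-y|$ and the drift is globally Lipschitz in the phase and linear in $(z_1,z_2)$. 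Once the limit is constructed, the Fokker--Planck equations in the statement are the forward Kolmogorov equations of these diffusions, and the existence of a density $p_k(t;\theta,\omega)$ in $\theta$ is immediate from the nondegenerate diffusion coefficient.

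Second, I would establish the convergence by a synchronous coupling. For each particle $(\theta_{k,i}(t),\omega_{k,i})$, construct the ideal copy $\bar\theta_{k,i}(t)$ driven by the same Brownian motion $W_{k,i}$, the same natural frequency $\omega_{k,i}$, and the same initial condition. Writing out the SDE for the error $\Delta_{k,i}(t)=\theta_{k,i}(t)-\bar\theta_{k,i}(t)$, each sine difference splits into a Lipschitz piece in $\Delta_{k,i}$ plus an ``empirical minus true'' piece of the form
\begin{align}
\frac{1}{N_{k'}}\sum_{j=1}^{N_{k'}}\sin(\bar\theta_{k',j}(t)-\bar\theta_{k,i}(t))\; -\; \EE\bigl[\sin(\bar\theta_{k'}(t)-\bar\theta_{k,i}(t))\,\big|\,\bar\theta_{k,i}(t)\bigr],
\end{align}
whose $L^2$-norm is $O(1/\sqrt{N})$ by independence of the ideal copies within each community. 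Grouping the two communities together and applying Gronwall to $\max_{k,i}\EE[\sup_{s\le t}|\Delta_{k,i}(s)|^2]$ yields a uniform bound of order $1/N$.

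Third, I would upgrade this particle-level coupling to convergence of empirical measures. Because the $\bar\theta_{k,i}$ are i.i.d.\ within each community, the Glivenko--Cantelli / Varadarajan theorem gives a.s.\ convergence of their empirical measures to $\nu_{k,t}$ in $\mathcal{M}_1(\mathbb{S}\times\R)$, and the coupling bound transfers this to the original empirical measures $\nu_{N_k,t}$ via a Wasserstein estimate; tightness in $\mathcal{C}([0,T],\mathcal{M}_1(\mathbb{S}\times\R))$ is standard from Aldous' criterion applied to $\int f\,\ddd\nu_{N_k,\cdot}$ for $f\in\mathcal{C}^2_b$, since the semimartingale decomposition has bounded characteristics. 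The main obstacle, and the only place where the two-community structure really intervenes, is the \emph{joint} Gronwall step: the drift of community~$1$ depends on $\nu_{N_2,t}$ and vice versa, so the two coupling errors are entangled. This is resolved by summing the two error bounds before applying Gronwall, using that the cross-interaction coefficients $L_1,L_2$ are finite; the rest of the argument is a direct two-population rewrite of the classical one-community proof.
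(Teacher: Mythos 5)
Your sketch is correct, but it takes a genuinely different route from the one the paper points to. The paper's proof is a one-line deferral to Dai Pra--den Hollander \cite{DdH96}, whose argument is a large-deviations one: Girsanov's theorem writes the Radon--Nikodym density of the interacting system against the non-interacting one as a functional of the empirical path measure, Sanov's theorem plus Varadhan's lemma then yield a full large deviation principle for the empirical measure on path space, and the McKean--Vlasov limit is read off as the unique zero of the rate function. Your proposal instead runs the Sznitman-style synchronous coupling: construct i.i.d.\ ideal copies $\bar\theta_{k,i}$ driven by the same Brownian motions and natural frequencies, split the drift error into a Lipschitz piece plus an ``empirical minus mean'' piece of size $O(1/\sqrt{N})$, and close a joint Gronwall estimate on the two communities before upgrading to convergence of empirical measures via Glivenko--Cantelli and Aldous tightness. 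Both methods are classical and both adapt cleanly to the two-community setting, since the drift is a bounded, globally Lipschitz functional of the two order parameters and the community sizes scale linearly in $N$; your observation that the two Gronwall errors must be summed before iterating is exactly the ``straightforward modification'' the paper leaves implicit. What each approach buys is different: the coupling gives an explicit $O(N^{-1/2})$ quantitative rate and is more elementary, while the \cite{DdH96} route gives exponential concentration and a full LDP, which is the natural starting point if one later wants metastability or exit-time estimates for the stationary solutions studied in the rest of the paper. One small point to tighten in your write-up: the statement is an \emph{annealed} limit with respect to the disorder, so your Picard iteration and Gronwall bounds should be taken in $L^2$ jointly over the Brownian motions \emph{and} the frequencies $\omega_{k,i}\sim\mu_k$; this is exactly what you do implicitly, but it is worth saying, since $\mu_k$ need not have compact support and the drift term $\omega$ is unbounded, so one needs a moment assumption on $\mu_k$ (or the localization trick of \cite{DdH96}) to make the Gronwall step legitimate.
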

\begin{proof}
The proof is analogous to that in the case of the one-community noisy Kuramoto model in \cite{DdH96} with straightforward modifications.
\end{proof}

\subsection{Stationary solutions}
\label{sec:stationarysol}
The stationary solutions of the McKean-Vlasov limit in Proposition \ref{prop:McKeanlimit} give the possible states the system can assume in the long time limit. These are presented in the next proposition. 

\begin{proposition}[\textbf{Stationary solutions}]
In the cases $r_{1}=r_{2}=0$ and $r_{1}, r_{2}>0$, the stationary density $p_{1}(\theta, \omega)$ solves the equation
\begin{eqnarray}
\label{eq:stationarypde1}
0 = \frac{D}{2}\frac{\partial^{2}p_{1}(\theta, \omega)}{\partial \theta^{2}} - \frac{\partial}{\partial \theta} \big[v_{1}(\theta, \omega)p_{1}(\theta, \omega)\big],
\end{eqnarray}
which has solution

\begin{align}
\label{eq:p1solution}
p_{1}(\theta, \omega) =& \frac{A_{1}(\theta, \omega)}{\int_{\mathbb{S}} \,\ddd\phi A_{1}(\phi, \omega)},
\end{align}
where
\begin{align}
A_{1}(\theta, \omega) = B_{1}(\theta, \omega)\Big(\eee^{\frac{4\pi\omega}{D}}\int_{\mathbb{S}}\frac{\ddd\phi}{B_{1}(\phi, \omega)} + (1-\eee^{\frac{4\pi\omega}{D}})\int_{0}^{\theta}\frac{\ddd\phi}{B_{1}(\phi, \omega)}\Big)
\end{align}
with
\begin{align}
B_{1}(\theta, \omega) = \exp\Big[ \frac{2\omega\theta}{D} + \frac{2\alpha_{2}L_{1}r_{2}\cos(\psi_{2}-\theta)}{D} + \frac{2\alpha_{1}K_{1}r_{1}\cos(\psi_{1} - \theta)}{D}\Big].
\end{align}
The stationary density $p_{2}(\theta, \omega)$, solves the equation
\begin{eqnarray}
\label{eq:stationarypde2}
0 = \frac{D}{2}\frac{\partial^{2}p_{2}(\theta, \omega)}{\partial \theta^{2}} - \frac{\partial}{\partial \theta} \big[v_{2}(\theta, \omega)p_{2}(\theta, \omega)\big],
\end{eqnarray}
which has solution
\begin{align}
\label{eq:p2solution}
p_{2}(\theta, \omega) =& \frac{A_{2}(\theta, \omega)}{\int_{\mathbb{S}} \,\ddd\phi A_{2}(\phi, \omega)},
\end{align}
where
\begin{align}
A_{2}(\theta, \omega) = B_{2}(\theta, \omega)\Big(\eee^{\frac{4\pi\omega}{D}}\int_{\mathbb{S}}\frac{\ddd\phi}{B_{2}(\phi, \omega)} + (1-\eee^{\frac{4\pi\omega}{D}})\int_{0}^{\theta}\frac{\ddd\phi}{B_{2}(\phi, \omega)}\Big)
\end{align}
with
\begin{align}
B_{2}(\theta, \omega) = \exp\Big[ \frac{2\omega\theta}{D} + \frac{2\alpha_{1}L_{2}r_{1}\cos(\psi_{1}-\theta)}{D} + \frac{2\alpha_{2}K_{2}r_{2}\cos(\psi_{2} - \theta)}{D}\Big].
\end{align}
In addition, the following self-consistency equations must be satisfied:
\begin{align}
\label{eq:thmselfcons}
r_{1} &= V_{1}^{\mu_{1}}(r_{1}, r_{2}) := \int_{\mathbb{R}}\mu_{1}(\ddd\omega)\int_{\mathbb{S}}\,\,\ddd\theta\cos(\psi_{1}-\theta)\,p_{1}(\theta, \omega),\\
r_{2} &= V_{2}^{\mu_{2}}(r_{1}, r_{2}) :=\int_{\mathbb{R}}\,\mu_{2}(\ddd\omega)\int_{\mathbb{S}}\,\ddd\theta\cos(\psi_{2}-\theta)\,p_{2}(\theta, \omega),\nonumber\\
0 &= U_{1}^{\mu_{1}}(r_{1}, r_{2}) := \int_{\mathbb{R}}\,\mu_{1}(\ddd\omega)\int_{\mathbb{S}}\,\ddd\theta\sin(\psi_{1}-\theta)\,p_{1}(\theta, \omega),\nonumber\\
0 &= U_{2}^{\mu_{2}}(r_{1}, r_{2}) := \int_{\mathbb{R}}\,\mu_{2}(\ddd\omega)\int_{\mathbb{S}}\,\ddd\theta\sin(\psi_{2}-\theta)\,p_{2}(\theta, \omega).\nonumber
\end{align}
\end{proposition}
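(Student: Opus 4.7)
The plan is to treat the stationary Fokker--Planck equation \eqref{eq:stationarypde1} as a second-order linear ODE in $\theta$ for each fixed $\omega$, and solve it on the circle $\mathbb{S}$ subject to periodic boundary conditions together with normalization in $\theta$. First I would integrate \eqref{eq:stationarypde1} once in $\theta$ to reduce to the first-order equation
\begin{equation*}
\tfrac{D}{2}\,\partial_\theta p_1(\theta,\omega) - v_1(\theta,\omega)\,p_1(\theta,\omega) = \tfrac{D}{2}\,J(\omega),
\end{equation*}
where the $\theta$-independent constant $J(\omega)$ plays the role of the stationary probability current on the circle.

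The crucial observation is that $B_1(\theta,\omega)$ is exactly an integrating factor for this equation: a direct computation shows $\partial_\theta \log B_1 = 2v_1/D$. Dividing the first-order ODE by $B_1$ and integrating from $0$ to $\theta$ then expresses $p_1(\theta,\omega)$ as a linear combination of the two unknowns $p_1(0,\omega)$ and $J(\omega)$, with $\theta$-dependence entering only through $B_1(\theta,\omega)$ and the primitive $\int_0^\theta B_1(\phi,\omega)^{-1}\,\ddd\phi$. I would pin down these two constants by (i) the periodicity condition $p_1(2\pi,\omega) = p_1(0,\omega)$, and (ii) the normalization $\int_{\mathbb{S}} p_1(\theta,\omega)\,\ddd\theta = 1$. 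The key ingredient for (i) is the identity $B_1(2\pi,\omega)=\eee^{4\pi\omega/D} B_1(0,\omega)$, since the $\cos(\psi_j-\theta)$ terms in $\log B_1$ are $2\pi$-periodic and only the linear piece $2\omega\theta/D$ contributes to the ratio. Substituting this relation back into the solved expression for $p_1$ reproduces exactly the bracketed combination that defines $A_1(\theta,\omega)$, and (ii) then fixes the overall $\omega$-dependent prefactor, yielding \eqref{eq:p1solution}. The argument for $p_2$ is identical after the substitution $1\leftrightarrow 2$.

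The self-consistency relations in \eqref{eq:thmselfcons} follow immediately by inserting the stationary $p_j$ into the defining complex identities of the order parameters,
\begin{equation*}
r_j\,\eee^{\mi\psi_j} = \int_{\mathbb{R}}\mu_j(\ddd\omega)\int_{\mathbb{S}}\eee^{\mi\theta}\,p_j(\theta,\omega)\,\ddd\theta,\qquad j=1,2,
\end{equation*}
multiplying both sides by $\eee^{-\mi\psi_j}$, and separating real and imaginary parts. I expect the only delicate point to be the derivation near $\omega=0$: the periodicity relation degenerates (forcing $J(0)=0$) and the factor $1-\eee^{4\pi\omega/D}$ in the intermediate formula vanishes. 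However, the final expression $p_1 = A_1/\int A_1$ extends continuously through this point, because the bracket in $A_1$ reduces in the limit to the $\theta$-independent quantity $\int_{\mathbb{S}} B_1^{-1}\,\ddd\phi$, which cancels in the quotient and yields the expected equilibrium Gibbs density $B_1(\theta,0)/\int_{\mathbb{S}} B_1(\phi,0)\,\ddd\phi$. Positivity of $p_1$ is then automatic: $B_1>0$, and the bracketed term is an affine function of the monotone primitive $\int_0^\theta B_1(\phi,\omega)^{-1}\,\ddd\phi$ that takes positive values at both endpoints $\theta = 0$ and $\theta = 2\pi$.
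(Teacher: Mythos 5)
Your proposal is correct and is essentially the same approach the paper intends: the paper's own proof is little more than a citation to den Hollander's textbook (Solution to Exercise X.33) for the $r_1,r_2>0$ case, and your write-up reproduces exactly the standard computation behind that reference. Integrating the stationary Fokker--Planck equation once to introduce the constant current $J(\omega)$, recognizing $B_1$ as the integrating factor via $\partial_\theta\log B_1 = 2v_1/D$, imposing periodicity via $B_1(2\pi,\omega)=\eee^{4\pi\omega/D}B_1(0,\omega)$, and normalizing is precisely the calculation, and you correctly observe that the final quotient $A_1/\int A_1$ extends continuously through $\omega=0$ (where $J$ must vanish and the bracket collapses to a $\theta$-independent factor). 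Your positivity remark and the derivation of the self-consistency relations by separating real and imaginary parts of $r_j\eee^{\mi\psi_j}=\int\mu_j(\ddd\omega)\int\eee^{\mi\theta}p_j\,\ddd\theta$ are likewise correct; the only thing the paper adds that you do not explicitly repeat is the observation that when $r_1=r_2=0$ the density reduces to the uniform $1/2\pi$, which your formula in any case specializes to.
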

\begin{proof}
Note that in the case when $r_{1}=r_{2}=0$, both stationary densities are uniform on $\mathbb{S}$, i.e., $p_{1}(\theta, \omega) = p_{2}(\theta, \omega)=\frac{1}{2\pi}$, which satisfies \eqref{eq:stationarypde1} and \eqref{eq:stationarypde2}. The proof in the case when $r_{1}, r_{2}>0$ is analogous to the calculation given in \cite[Solution to Exercise X.33]{H00}.
\end{proof}

\begin{remark}
In the simplified version of the model we will consider below, we are able to prove that solutions of the type $r_{1}=0$ and $r_{2}>0$ (or vice versa) are not possible, but it is difficult to prove this in the general case considered above.  
\end{remark}

In order to understand the steady-state phase difference between the communities, we proceed heuristically as follows. For the stationary solutions we assume that $\hspace{0.15cm}r_{1}(t),\hspace{0.15cm} r_{2}(t), \\ \psi_{1}(t),\psi_{2}(t)$ reach their steady-state values $r_{1}, r_{2}, \psi_{1}, \psi_{2}$ as $t\rightarrow\infty$ and assume that the parameters of the system are such that $r_{1}, r_{2} >0$. For the synchronization levels the possible steady-state values are computed by solving the self-consistency equations in \eqref{eq:thmselfcons}. For the average phases we use standard It\^{o}-calculus to compute their evolution
\begin{align}
\label{eq:Ito}
\ddd \psi_{m}(t) = \sum_{j=1}^{N_{m}}\frac{\partial \psi_{m}}{\partial \theta_{m, j}} \ddd\theta_{m, j} +\frac{1}{2}\sum_{j=1}^{N_{m}}\frac{\partial^{2}\psi_{m}}{\partial \theta_{m, j}^{2}}(\ddd\theta_{m, j})^{2},\quad m\in\{1, 2\}.
\end{align}
From the definition of the order parameters we have
\begin{align}
\label{eq:Ito1}
\frac{\partial \psi_{m}}{\partial \theta_{m, j}} = \frac{1}{N_{m}r_{m}(t)}\cos(\psi_{m}(t)-\theta_{m, j}(t)),\quad m\in\{1, 2\},
\end{align}
and 
\begin{align}
\label{eq:Ito2}
\frac{\partial^{2}\psi_{m}}{\partial \theta_{m, j}^{2}} =& \frac{1}{N_{m} r_{m}(t)}\sin(\psi_{m}(t) - \theta_{m, j}(t))\\ 
&- \frac{2}{(N_{m}r_{m}(t))^{2}}\sin(\psi_{m}(t)-\theta_{m, j}(t))\cos(\psi_{m}(t)-\theta_{m, j}(t)),\quad m\in\{1, 2\}\nonumber.
\end{align}
Substituting \eqref{eq:Ito1}--\eqref{eq:Ito2} and \eqref{eq:rewritenevolution1}--\eqref{eq:rewritenevolution2} into \eqref{eq:Ito}, setting $N_{m}=\alpha_{m}N$ and taking the large-$N$ limit, we get the equations
\begin{align}
\label{eq:psi1Ito}
\ddd \psi_{1}(t) =& \Bigg(\frac{K_{1}\alpha_{1}}{2}\int_{\mathbb{S}}\ddd\theta \int_{\mathbb{R}}\mu_{1}(\ddd\omega)\,\cos(\psi_{1}(t)-\theta)\sin(\psi_{1}(t)-\theta)p_{1}(t;\theta, \omega)\\
&+\frac{L_{1}\alpha_{2}r_{2}(t)}{2r_{1}(t)}\int_{\mathbb{S}}\ddd\theta \int_{\mathbb{R}}\mu_{1}(\ddd\omega)\,\cos(\psi_{1}(t)-\theta)\sin(\psi_{2}(t)-\theta)p_{1}(t;\theta, \omega)\nonumber\\
&+ \frac{1}{r_{1}(t)}\int_{\mathbb{S}}\ddd\theta \int_{\mathbb{R}} \mu_{1}(\ddd\omega)\,\omega \cos(\psi_{1}(t) - \theta)p_{1}(t;\theta, \omega)\nonumber\\
&+\frac{D}{2}\int_{\mathbb{S}}\ddd\theta \int_{\mathbb{R}} \mu_{1}(\ddd\omega)\,\sin(\psi_{1}(t) - \theta)p_{1}(t;\theta, \omega)\Bigg)\ddd t ,\nonumber
\end{align}
\begin{align}
\label{eq:psi2Ito}
\ddd \psi_{2}(t) =& \Bigg(\frac{K_{2}\alpha_{2}}{2}\int_{\mathbb{S}}\ddd\theta \int_{\mathbb{R}} \mu_{2}(\ddd\omega)\,\cos(\psi_{2}(t)-\theta)\sin(\psi_{2}(t)-\theta)p_{2}(t;\theta, \omega)\\
&+\frac{L_{2}\alpha_{1}r_{1}(t)}{2r_{2}(t)}\int_{\mathbb{S}}\ddd\theta \int_{\mathbb{R}} \mu_{2}(\ddd\omega)\,\cos(\psi_{2}(t)-\theta)\sin(\psi_{1}(t)-\theta)p_{2}(t;\theta, \omega)\nonumber\\
&+ \frac{1}{r_{2}(t)}\int_{\mathbb{S}}\ddd\theta \int_{\mathbb{R}} \mu_{2}(\ddd\omega)\,\omega \cos(\psi_{2}(t) - \theta)p_{2}(t;\theta, \omega)\Bigg)\ddd t .\nonumber\\
&+\frac{D}{2}\int_{\mathbb{S}}\ddd\theta \int_{\mathbb{R}} \mu_{2}(\ddd\omega)\,\sin(\psi_{2}(t) - \theta)p_{2}(t;\theta, \omega)\Bigg)\ddd t .\nonumber
\end{align}
Due to the last two self-consistency equations in \eqref{eq:thmselfcons} the last line of \eqref{eq:psi1Ito} and \eqref{eq:psi2Ito} is zero. For the steady-state average phases in the case when $\mu_{1}=\mu_{2}=\delta_{0}$, we must therefore simultaneously solve the equations 
\begin{align}
0 =& \frac{K_{1}\alpha_{1}}{2}\int_{\mathbb{S}}\cos(\psi_{1}-\theta)\sin(\psi_{1}-\theta)p_{1}(\theta, 0)\ddd\theta\\
&+\frac{L_{1}\alpha_{2}r_{2}}{2r_{1}}\int_{\mathbb{S}}\cos(\psi_{1}-\theta)\sin(\psi_{2}-\theta)p_{1}(\theta, 0)\ddd\theta\nonumber,
\end{align}
\begin{align}
0 =& \frac{K_{2}\alpha_{2}}{2}\int_{\mathbb{S}}\cos(\psi_{2}-\theta)\sin(\psi_{2}-\theta)p_{2}(\theta, 0)\ddd\theta\\
&+\frac{L_{2}\alpha_{1}r_{1}}{2r_{2}}\int_{\mathbb{S}}\cos(\psi_{2}-\theta)\sin(\psi_{1}-\theta)p_{2}(\theta, 0)\ddd\theta\nonumber.
\end{align}
Since the system is invariant under rotations, we can set one of the two angles to zero. If we set $\psi_{1}=0$, then we see that the equation for $\psi_{2}$ is satisfied by taking $\psi_{2}=0$ or $\psi_{2}=\pi$. The above calculation is not rigorous, but does suggest the following conjecture.

\begin{conjecture}[\textbf{Steady-state phase difference}]
\label{con:phasedifference}
In the system without disorder, the phase difference $\psi=\psi_{2}-\psi_{1}$ between the two communities in the two-community noisy Kuramoto model with $K_{1}=K_{2}=K$ and $L_{1}=L_{2}=L\neq 0$ in the steady state can only be $\psi=0$ or $\psi=\pi$.
\end{conjecture}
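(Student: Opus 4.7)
The plan is to attack the conjecture directly via the stationary self-consistency equations \eqref{eq:thmselfcons}, which are already rigorous, rather than via the heuristic It\^o derivation that motivates it. I restrict attention to the nontrivial regime $r_1, r_2 > 0$ (the phase difference $\psi$ is ill-defined otherwise), and use rotational invariance to fix $\psi_1 = 0$, writing $\psi = \psi_2$; the goal is then to show $\sin\psi = 0$.

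First I would specialize the formulas of the Stationary solutions proposition to the no-disorder case $\mu_1 = \mu_2 = \delta_0$. At $\omega = 0$ the factor $\eee^{4\pi\omega/D}$ in $A_1$ equals one, so $A_1(\theta, 0) = B_1(\theta, 0)\int_{\mathbb{S}} \ddd\phi/B_1(\phi,0)$ and this inner integral cancels from numerator and denominator of \eqref{eq:p1solution}, leaving the Gibbs form
\begin{align*}
p_1(\theta) = \frac{1}{Z_1}\exp\bigl[c\cos\theta + s\sin\theta\bigr], \qquad c = \tfrac{2\alpha_1 K r_1}{D} + \tfrac{2\alpha_2 L r_2}{D}\cos\psi, \qquad s = \tfrac{2\alpha_2 L r_2}{D}\sin\psi,
\end{align*}
after expanding $\cos(\psi-\theta) = \cos\psi\cos\theta + \sin\psi\sin\theta$. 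Plugging this into the sine self-consistency equation $U_1^{\mu_1}(r_1, r_2) = 0$ from \eqref{eq:thmselfcons} yields the scalar constraint
\begin{align*}
0 = \int_{\mathbb{S}} \sin\theta \, \eee^{c\cos\theta + s\sin\theta}\,\ddd\theta.
\end{align*}

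The final step is to write $c\cos\theta + s\sin\theta = R\cos(\theta - \beta)$ with $R = \sqrt{c^2 + s^2}$ and $(\cos\beta, \sin\beta) = (c/R, s/R)$, substitute $\theta = \theta' + \beta$, and split by angle addition to obtain $\cos\beta \int_{\mathbb{S}} \sin\theta' \eee^{R\cos\theta'}\ddd\theta' + \sin\beta \int_{\mathbb{S}} \cos\theta' \eee^{R\cos\theta'}\ddd\theta'$. The first integral vanishes because $\sin\theta'\,\eee^{R\cos\theta'}$ has a $2\pi$-periodic antiderivative, while the second equals $2\pi I_1(R)$, the standard Bessel integral, strictly positive for $R > 0$. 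Hence $\sin\beta = 0$, i.e.\ $s = 0$; since $L \neq 0$ and $r_2 > 0$, this forces $\sin\psi = 0$, so $\psi \in \{0, \pi\}$.

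The only care needed is in the degenerate case $R = 0$, but there $s = 0$ is automatic and the conclusion still holds. The identical argument applied to $U_2^{\mu_2} = 0$ furnishes the same constraint on $\psi$, providing an internal consistency check. Since the computation is short and fully rigorous, I do not foresee any serious technical obstacle: the conjecture should follow essentially immediately once the stationary density is written in the exponential-family form above.
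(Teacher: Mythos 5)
Your proof is correct and uses essentially the same approach as the paper's Proposition~\ref{prop:conjecture}: both fix $\psi_1=0$, reduce the sine self-consistency equation to a relation of the form $\sin\psi\cdot(\text{strictly positive factor})=0$ (the paper writes it as $Lr_2\sin\psi\, W(\cdots)=0$; your $\sin\beta\,I_1(R)=0$ is the same computation after the shift $\theta\mapsto\theta'+\beta$), and conclude $\psi\in\{0,\pi\}$ when $r_1,r_2>0$. The only thing you omit relative to the paper (and it is not strictly needed for the conjecture as stated, since $\psi$ is undefined there) is the separate argument ruling out the mixed case $r_1=0,\,r_2>0$, which the paper handles by showing \eqref{eq:propr1is0} forces $\psi_2\in\{\pi/2,3\pi/2\}$ and that these are incompatible with \eqref{eq:propanglesolve1}.
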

The intuition for this conjecture is that the system will try to maximize the interaction strength between oscillators in order to achieve the highest synchronization in each community. This will be achieved at $\psi=0$ when $L>0$ and at $\psi=\pi$ when $L<0$. The other combinations ($\psi=0$ with $L<0$ and $\psi=\pi$ with $L>0$) should also be possible, but should not be stable. For an illustration of stability properties obtained via simulations, we refer the reader to Section~\ref{sec:simulation}.  
 
\section{Symmetric interaction with fixed phase difference}
\label{sec:critcon}
In this section we pick $L_{1}=L_{2} = L$, $K_{1}=K_{2} = K$,  $\alpha_{1} =\alpha_{2}$, $D=1$. In Section \ref{sec:withoutdisorder} we consider the case where the natural frequency of the oscillators is zero, and in Section \ref{sec:withdisorder} the case where the natural frequency of the oscillators is drawn from a symmetric distribution $\mu$ on $\mathbb{R}$.
\subsection{Without disorder}
\label{sec:withoutdisorder}
Here we take $\mu_{1} = \mu_{2} = \delta_{0}$. This simplifies \eqref{eq:p1solution} and \eqref{eq:p2solution} to 
\begin{align}
p_{1}(\theta) &= \frac{\exp\Big[ Lr_{2}\cos(\psi_{2} - \theta) + Kr_{1}\cos(\psi_{1} - \theta)\Big]}{\int_{\mathbb{S}}\ddd\phi\,\exp\Big[ Lr_{2}\cos(\psi_{2} - \phi) + Kr_{1}\cos(\psi_{1} - \phi)\Big]},
\label{eq:simplep1}\\
p_{2}(\theta) &= \frac{\exp\Big[ Lr_{1}\cos(\psi_{1} - \theta) + Kr_{2}\cos(\psi_{2} - \theta)\Big]}{\int_{\mathbb{S}}\ddd\phi\,\exp\Big[ Lr_{1}\cos(\psi_{1} - \phi) + Kr_{2}\cos(\psi_{2} - \phi)\Big]}.
\label{eq:simplep2}
\end{align}

The self-consistency equations for $r_{1}$ and $r_{2}$ in \eqref{eq:thmselfcons} can be written in the form
\begin{align}
\label{eq:selfconsW}
r_{1} &= \frac{(a_{1}\cos\psi_{1}+b_{1}\sin\psi_{1})}{2}W\Big(\sqrt{a_{1}^{2}+b_{1}^{2}}\;\Big),\\
r_{2} &= \frac{(a_{2}\cos\psi_{2}+b_{2}\sin\psi_{2})}{2}W\Big(\sqrt{a_{2}^{2}+b_{2}^{2}}\;\Big),\nonumber
\end{align}
where $W(x) = \frac{2V(x)}{x}, x\in (0, \infty),$ with
\begin{align}
\label{eq:Vdefinition}
V(x) = \frac{\int_{\mathbb{S}}\,\ddd\theta\,\cos\theta \,\eee^{x\cos\theta}}{\int_{\mathbb{S}}\,\ddd\theta\,\eee^{x\cos\theta}},\quad x\in[0, \infty).
\end{align}
The definitions of $a_{1}, a_{2}, b_{1}$ and $b_{2}$ will be given below. The function $V(x)$ is the same function that appears in the self-consistency equation of the one-community noisy Kuramoto model \cite[Equation 2.2]{Giacomin2014}. To see why the self-consistency equations can be written as in \eqref{eq:selfconsW}, note that 
\begin{align}
\int_{\mathbb{S}}\ddd\theta\,\eee^{a\cos\theta + b\sin\theta} = 2\pi \text{I}_{0}(\sqrt{a^{2}+b^{2}}),
\end{align}
with $\text{I}_{m}(x):= \frac{1}{2\pi}\int_{\mathbb{S}}\ddd\theta (\cos\theta)^{m}\exp(x\cos\theta)$ the modified Bessel functions of the first kind, so that 
\begin{align}
\label{eq:cosint}
\int_{\mathbb{S}}\ddd\theta\,\cos\theta\,\eee^{a\cos\theta + b\sin\theta} &= \frac{\partial}{\partial a}2\pi\text{I}_{0}(\sqrt{a^{2}+b^{2}})
= \frac{2\pi a \text{I}_{1}(\sqrt{a^{2}+b^{2}})}{\sqrt{a^{2}+b^{2}}},\\
\int_{\mathbb{S}}\ddd\theta\,\sin\theta\,\eee^{a\cos\theta + b\sin\theta} &= \frac{\partial}{\partial b}2\pi\text{I}_{0}(\sqrt{a^{2}+b^{2}})= \frac{2\pi b \text{I}_{1}(\sqrt{a^{2}+b^{2}})}{\sqrt{a^{2}+b^{2}}}.\nonumber
\end{align} 
Here we have used the identity $I_{0}(x)=I_{1}(x)$ given in \cite[9.6.27]{Abramowitz65}.
Using \eqref{eq:cosint} and the trigonometric identity $\cos(a-b) = \cos a\cos b + \sin a\sin b$, $a, b \in \mathbb{R}$, we can rewrite the self-consistency equations for $r_{1}$ and $r_{2}$ as
\begin{align}
r_{1} &= \frac{(a_{1}\cos\psi_{1}+b_{1}\sin\psi_{1}) \text{I}_{1}(\sqrt{a_{1}^{2}+b_{1}^{2}})}{\sqrt{a_{1}^{2}+b_{1}^{2}}\;\text{I}_{0}(\sqrt{a_{1}^{2}+b_{1}^{2}})},\\
r_{2} &= \frac{(a_{2}\cos\psi_{2}+b_{2}\sin\psi_{2}) \text{I}_{1}(\sqrt{a_{2}^{2}+b_{2}^{2}})}{\sqrt{a_{1}^{2}+b_{1}^{2}}\;\text{I}_{0}(\sqrt{a_{2}^{2}+b_{2}^{2}})}\nonumber,
\end{align}
where
\begin{align}
a_{1} &= K r_{1}\cos\psi_{1} + Lr_{2}\cos\psi_{2},\quad b_{1} = Kr_{1}\sin\psi_{1} + Lr_{2}\sin\psi_{2},\\
\label{eq:generalselfcons}
a_{2} &= K r_{2}\cos\psi_{2} + Lr_{1}\cos\psi_{1},\quad b_{2} =  K r_{2}\sin\psi_{2} + Lr_{1}\sin\psi_{1}.\nonumber
\end{align}
Note that
\begin{align}
&a_{1}^{2} + b_{1}^{2} = K^{2}r_{1}^{2} + L^{2}r_{2}^{2} + 2KLr_{1}r_{2}\cos\psi,\\
&a_{2}^{2} + b_{2}^{2} = K^{2}r_{2}^{2} + L^{2}r_{1}^{2} + 2KLr_{1}r_{2}\cos\psi,
\end{align}
where we recall $\psi = \psi_{2}-\psi_{1}$.
The most suggestive form of the self-consistency equations is in terms of $K,L$ and the phase difference $\psi$:
\begin{align}
r_{1} &= \frac{(Kr_{1} + Lr_{2}\cos\psi)}{2} W\Big(\sqrt{K^{2}r_{1}^{2} + L^{2}r_{2}^{2} + 2KLr_{1}r_{2}\cos\psi}\,\Big),\nonumber\\
r_{2} &= \frac{(Kr_{2} + Lr_{1}\cos\psi)}{2} W\Big(\sqrt{K^{2}r_{2}^{2} + L^{2}r_{1}^{2} + 2KLr_{1}r_{2}\cos\psi}\,\Big)\label{eq:selfconsfinal}
\end{align}
and is obtained by substituting the expressions for $a_{1}, a_{2}, b_{1}$ and $b_{2}$ into \eqref{eq:selfconsW}.

\begin{proposition}[\textbf{Properties of $V$}]\ \\
\label{prop:vproperties}
\vspace{-0.5cm}
\begin{enumerate}
\item $V(0) = 0$.
\item $V'(0) = \frac{1}{2}$.
\item $x\mapsto V(x)$ is strictly increasing on $[0, \infty)$.
\item $x\mapsto V(x)$ is strictly concave on $[0, \infty)$.
\item $V(x) <\frac{x}{2}$ for $x\in (0, \infty)$.
\item $\lim_{x\to\infty} V(x) = 1$.
\item $V(-x) = -V(x)$ for all $x\in (0, \infty)$.
\end{enumerate}
\end{proposition}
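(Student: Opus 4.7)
The plan is to express $V$ as the derivative of the cumulant generating function of $\cos\theta$ under the uniform measure on $\mathbb{S}$. Setting $Z(x) = \int_{\mathbb{S}} \eee^{x\cos\theta}\,\ddd\theta$ and $\mu_{x}(\ddd\theta) = Z(x)^{-1}\eee^{x\cos\theta}\,\ddd\theta$, one has $V(x) = (\log Z)'(x) = \EE_{\mu_{x}}[\cos\theta]$, so that $V'(x) = \mathrm{Var}_{\mu_{x}}(\cos\theta)$ and $V''(x) = \EE_{\mu_{x}}[(\cos\theta - V(x))^{3}]$. This unifies the derivative computations and makes the origin of each property transparent.

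From these representations, properties 1, 2, 3, 6 and 7 follow directly. Property 1 observes that $V(0)$ has numerator $\int_{\mathbb{S}} \cos\theta\,\ddd\theta = 0$. Property 7 follows from the substitution $\theta \mapsto \theta + \pi$, which sends $\cos\theta \mapsto -\cos\theta$ while leaving $\ddd\theta$ invariant. Property 2 holds because $\mu_{0}$ is uniform, so $V'(0) = \frac{1}{2\pi}\int_{\mathbb{S}}\cos^{2}\theta\,\ddd\theta = 1/2$. Property 3 is $V'(x) = \mathrm{Var}_{\mu_{x}}(\cos\theta) > 0$, since $\cos\theta$ is not $\mu_{x}$-almost-surely constant. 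Property 6 is Laplace's method: as $x \to \infty$ the measure $\mu_{x}$ concentrates at $\theta = 0$, whence $V(x) \to 1$. Once property 4 is established, property 5 is then a direct consequence: strict concavity on $[0,\infty)$ together with $V(0) = 0$ and $V'(0) = 1/2$ forces $V(x)$ to lie strictly below its tangent line $x \mapsto x/2$ for $x > 0$.

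The main obstacle is property 4, strict concavity on $[0,\infty)$. My plan is to exploit the Bessel-function representation $V(x) = \text{I}_{1}(x)/\text{I}_{0}(x)$, which follows from $Z(x) = 2\pi\text{I}_{0}(x)$ together with the identity already used in the paper for $\int_{\mathbb{S}}\cos\theta\,\eee^{x\cos\theta}\,\ddd\theta$. Combined with the recurrences $\text{I}_{0}'(x) = \text{I}_{1}(x)$ and $\text{I}_{1}'(x) = \text{I}_{0}(x) - \text{I}_{1}(x)/x$, this yields the Riccati-type ODE
\begin{align*}
V'(x) = 1 - \frac{V(x)}{x} - V(x)^{2}, \qquad x > 0.
\end{align*}
Differentiating once and eliminating $V'$ gives $V''(x) = V(x)/x^{2} - V'(x)\bigl(1/x + 2V(x)\bigr)$, which reduces $V''(x) < 0$ to the explicit polynomial inequality $2V(x)/x + 3V(x)^{2} + 2xV(x)^{3} < 1 + 2xV(x)$. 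I expect this to be the technical heart of the proof: the series expansion $V(x) = x/2 - x^{3}/16 + O(x^{5})$ as $x\downarrow 0$ and the asymptotics $V(x) = 1 - 1/(2x) + O(x^{-2})$ as $x\to\infty$ dispatch the two boundary regimes, and for the intermediate range one combines the a priori bounds $0 < V(x) < \min(x/2,1)$ with a Tur\'{a}n-type Bessel inequality such as $\text{I}_{1}(x)^{2} > \text{I}_{0}(x)\text{I}_{2}(x)$, possibly supplemented by a numerical verification in the spirit of the other numerically assisted proofs announced in the paper.
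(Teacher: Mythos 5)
Your cumulant-generating-function framing is clean and correct: with $Z(x)=\int_{\mathbb{S}}\eee^{x\cos\theta}\,\ddd\theta$ and $\mu_x$ the tilted measure, $V=(\log Z)'$, $V'=\mathrm{Var}_{\mu_x}(\cos\theta)$, $V''=\EE_{\mu_x}\bigl[(\cos\theta-V(x))^3\bigr]$, and properties 1, 2, 3, 6, 7 and (given 4) 5 all drop out exactly as you say. This matches the paper, which dismisses 1, 2, 3, 6 as ``easily verified,'' proves 7 by the same reflection, and derives 5 from 1, 2, 4. Your Riccati ODE $V'=1-V/x-V^2$ and the equivalent polynomial inequality $\tfrac{2V}{x}+3V^2+2xV^3<1+2xV$ for $V''<0$ are also correct (the paper itself uses this ODE later, citing \cite{G10}).

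The gap is Property 4. Having derived $V''(x)=\EE_{\mu_x}\bigl[(\cos\theta-V(x))^3\bigr]$, you abandon the third-moment representation and switch to the polynomial inequality, which you then only sketch how to attack: boundary asymptotics, the Tur\'an inequality $\text{I}_1^2>\text{I}_0\text{I}_2$, ``possibly supplemented by numerical verification.'' That plan does not close. The Tur\'an inequality, rewritten via the recurrence $\text{I}_2=\text{I}_0-\tfrac{2}{x}\text{I}_1$, is equivalent to $1-V^2<\tfrac{2V}{x}$, and $V'>0$ gives $1-V^2>\tfrac{V}{x}$; combining the two-sided bound $\tfrac{V}{x}<1-V^2<\tfrac{2V}{x}$ with the polynomial target, the slack on both sides is of the same order and the inequality does not follow --- you would need a strictly sharper input on the intermediate range, and no such input is identified. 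The paper closes Property 4 by staying with the third-moment representation: pushing forward to $u=\cos\theta$, the base density is proportional to $(1-u^2)^{-1/2}$ on $[-1,1]$, which is even and \emph{non-decreasing} on $[0,1]$; this is precisely the hypothesis of \cite[Lemma 4]{PP81} (a GHS-type skewness inequality for tilted symmetric measures with monotone density), which gives $\int_{-1}^1 \eee^{xu}(m-u)^3\,\ddd\nu(u)\ge 0$ and hence $V''\le 0$, with strictness by inspection. In short: you already have the right object in hand ($V''$ as a tilted third central moment); the missing idea is to exploit the monotonicity of the density of $\cos\theta$ rather than to reduce to a polynomial inequality in $V$ and $x$.
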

\begin{proof}
Properties 1, 2, 3 and 6 are easily verified. Property 4 is proven by applying Lemma 4 in \cite{PP81} (see Appendix \ref{app:concavity} for a comprehensive proof). Property 5 is a direct consequence of properties 1, 2 and 4. For Property 7, use $-\cos(\theta) = \cos(\pi-\theta)$ to write
\begin{align}
V(-x) = \frac{\int_{\mathbb{S}}\ddd\theta\,\cos\theta\eee^{x\cos(\pi-\theta)}}{\int_{\mathbb{S}}\ddd\theta\,\eee^{x\cos(\pi-\theta)}}.
\end{align} 
By performing the change of variable $\phi = \pi - \theta$, we get $V(-x) = -V(x)$.
\end{proof}

\begin{proposition}[\textbf{Properties of $W$}]\ \\
\label{prop:wproperties}
\vspace{-0.5cm}
\begin{enumerate}
\item $\lim_{x\downarrow 0}W(x) = 1$.
\item $x\mapsto W(x)$ is continuous and strictly decreasing on $[0, \infty)$.
\item $\lim_{x\to\infty}W(x) = 0$.
\end{enumerate}
\end{proposition}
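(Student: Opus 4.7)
The plan is to deduce each property of $W(x) = 2V(x)/x$ directly from the corresponding property of $V$ established in Proposition \ref{prop:vproperties}; no genuinely new analysis is needed.

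For property 1, I would use the Taylor expansion $V(x) = V(0) + V'(0)x + o(x) = \tfrac{1}{2}x + o(x)$ as $x \downarrow 0$, which follows from properties 1 and 2 of $V$, to conclude $W(x) = 2V(x)/x \to 2 \cdot \tfrac{1}{2} = 1$.

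For property 2, continuity of $W$ on $(0,\infty)$ is immediate because $V$ is smooth (a ratio of integrals of analytic functions) and $x$ does not vanish, while continuity at $0$ is supplied by property 1 above. For strict monotonicity the cleanest route is to prove the stronger statement that $x \mapsto V(x)/x$ is strictly decreasing on $(0,\infty)$, which I would extract from strict concavity of $V$ (property 4) together with the boundary value $V(0) = 0$ (property 1): for any $0 < y < x$, set $t = y/x \in (0,1)$ and apply strict concavity to the convex combination $y = tx + (1-t)\cdot 0$, giving
\begin{align*}
V(y) = V\bigl(tx + (1-t)\cdot 0\bigr) > tV(x) + (1-t)V(0) = \tfrac{y}{x}V(x),
\end{align*}
so that $V(y)/y > V(x)/x$ and hence $W(y) > W(x)$.

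For property 3, property 6 of $V$ gives $V(x)\to 1$, whence $W(x) = 2V(x)/x \to 0$ trivially. There is no real obstacle in this proposition; the only step that warrants any care is the strict-monotonicity argument, where both strict concavity of $V$ and the identity $V(0) = 0$ are genuinely used (mere concavity, or a different value at $0$, would not yield the chord-slope comparison).
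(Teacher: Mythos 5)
Your proposal is correct and rests on the same ingredients as the paper's proof: properties 1 and 3 are straightforward limits, and property 2 ultimately reduces to showing $V(x)/x$ is strictly decreasing, which both you and the paper obtain from strict concavity of $V$ together with $V(0)=0$. The only cosmetic difference is that the paper phrases the monotonicity step via the derivative inequality $V'(x) < V(x)/x$, whereas you use the equivalent chord-slope form of strict concavity directly, which is if anything slightly cleaner since it avoids invoking differentiability of $W$.
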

\begin{proof}
Properties 1 and 3 are easily verified. For property 2, note that
\begin{align}
W'(x) = 2\frac{V'(x)x-V(x)}{x^{2}},
\end{align}
so we need to verify that $V'(x)<\frac{V(x)}{x}$. This is true by properties 1 and 4 in Proposition \ref{prop:vproperties}.
\end{proof}

In the case without disorder Conjecture \ref{con:phasedifference} can be proven.
\begin{proposition}
\label{prop:conjecture}
Fix $\psi_{1}=0$ and assume that $\mu_{1}=\mu_{2}=\delta_{0}$. Then the order parameters of the system are either $r_{1}, r_{2}=0$ or $r_{1}, r_{2}>0$ and $\psi\in \{0, \pi\}$.
\end{proposition}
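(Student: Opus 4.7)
The plan is to split into two cases according to whether $r_{1}r_{2}>0$ or $r_{1}r_{2}=0$. The key observation is that the two ``sine'' self-consistency equations $U_{1}^{\mu_{1}}=U_{2}^{\mu_{2}}=0$ in \eqref{eq:thmselfcons} have not been used in the derivation of the reduced system \eqref{eq:selfconsfinal}, and these are exactly what will pin down the phases.

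For the degenerate cases, suppose $r_{1}=0$ and $r_{2}>0$. Then the $\psi_{1}$-dependence drops out of \eqref{eq:simplep1} and $p_{1}(\theta)$ reduces to the von Mises density proportional to $\exp[Lr_{2}\cos(\psi_{2}-\theta)]$. Writing $a=Lr_{2}\cos\psi_{2}$, $b=Lr_{2}\sin\psi_{2}$ and applying \eqref{eq:cosint} together with the identification $V(x)=\mathrm{I}_{1}(x)/\mathrm{I}_{0}(x)$, one finds $\int_{\mathbb{S}}\eee^{\mi\theta}p_{1}(\theta)\,\ddd\theta=\mathrm{sgn}(L)\,\eee^{\mi\psi_{2}}\,V(|L|r_{2})$, whose modulus $V(|L|r_{2})$ is strictly positive by Proposition \ref{prop:vproperties}(3) (using $L\neq 0$ and $r_{2}>0$). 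But by definition this integral equals $r_{1}\eee^{\mi\psi_{1}}=0$, a contradiction. The case $r_{1}>0$, $r_{2}=0$ is symmetric.

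For the non-degenerate case $r_{1},r_{2}>0$, I would repeat the Bessel computation that produced \eqref{eq:selfconsW}, but with $\sin(\psi_{1}-\theta)$ in place of $\cos(\psi_{1}-\theta)$ in the integrand of $U_{1}^{\delta_{0}}$. Using the second identity in \eqref{eq:cosint} this yields $0=U_{1}^{\delta_{0}}(r_{1},r_{2})=\bigl[(a_{1}\sin\psi_{1}-b_{1}\cos\psi_{1})/\sqrt{a_{1}^{2}+b_{1}^{2}}\bigr]\,V\bigl(\sqrt{a_{1}^{2}+b_{1}^{2}}\bigr)$. Because $r_{1}>0$ in \eqref{eq:selfconsW} forces $(a_{1},b_{1})\neq(0,0)$, the factor $V(\sqrt{a_{1}^{2}+b_{1}^{2}})$ is strictly positive by Proposition \ref{prop:vproperties}(3), so the constraint collapses to $a_{1}\sin\psi_{1}=b_{1}\cos\psi_{1}$. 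With the gauge choice $\psi_{1}=0$ this is simply $b_{1}=Lr_{2}\sin\psi_{2}=0$, and since $L\neq 0$ and $r_{2}>0$ we conclude $\sin\psi_{2}=0$, i.e.\ $\psi=\psi_{2}\in\{0,\pi\}$. The analogous identity from $U_{2}^{\delta_{0}}=0$ is then automatic, since $\psi_{2}\in\{0,\pi\}$ gives $b_{2}=Kr_{2}\sin\psi_{2}+Lr_{1}\sin\psi_{1}=0$ as well.

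I do not expect a serious obstacle; the content of the proposition is that the two ``sine'' equations, which played no role in reducing the system to \eqref{eq:selfconsfinal}, become elementary algebraic constraints once one exploits the Bessel-function identities of Section~3. The only point requiring care is the strict positivity of $\sqrt{a_{1}^{2}+b_{1}^{2}}$ when $r_{1}>0$, which follows automatically from \eqref{eq:selfconsW} combined with Proposition \ref{prop:wproperties}.
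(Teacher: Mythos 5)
Your proof is correct and follows the same strategy as the paper's: reduce the self-consistency equations to Bessel-function expressions, show the degenerate case $r_1=0,\ r_2>0$ is impossible, and then read off $\sin\psi=0$ from the sine equations $U_1^{\delta_0}=U_2^{\delta_0}=0$. Your handling of the degenerate case via the modulus of the complex order parameter is a slightly tidier packaging of the paper's two-step argument (the paper first pins $\psi_2\in\{\pi/2,3\pi/2\}$ from the cosine equation and then rules these out with the sine equation), but the underlying computation and conclusion are identical.
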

\begin{proof}
Here the set of self-consistency equations \eqref{eq:thmselfcons} simplify to 
\begin{align}
r_{1} &= \int_{\mathbb{S}}\ddd \theta \,\cos(\psi_{1}-\theta)\,p_{1}(\theta),\label{eq:propselfcons1}\\
r_{2} &= \int_{\mathbb{S}}\ddd \theta \,\cos(\psi_{2}-\theta)\,p_{2}(\theta),\label{eq:propselfcons2}\\
0 &= \int_{\mathbb{S}}\ddd\theta \,\sin(\psi_{1}-\theta)\,p_{1}(\theta),\label{eq:propselfcons3}\\
0 &= \int_{\mathbb{S}}\ddd\theta \,\sin(\psi_{2}-\theta)\,p_{2}(\theta).\label{eq:propselfcons4}
\end{align}
Since the system is invariant under rotations we can set one of the average phase angles to zero. So take $\psi_{1}=0$ such that $\psi=\psi_{2}$. To determine which phase differences are possible we are left to solve 
\begin{align}
\label{eq:propanglesolve1}
0 &= \int_{\mathbb{S}}\ddd\theta \, \sin\theta\frac{\exp\Big[ Lr_{2}\cos(\psi_{2}-\theta) + Kr_{1}\cos\theta\Big]}{\int_{\mathbb{S}}\ddd\phi\,\exp\Big[Lr_{2}\cos(\psi_{2}-\theta) + Kr_{1}\cos\theta\Big]} \\
&= Lr_{2}\sin\psi \;W\Big(\sqrt{K^{2}r_{1}^{2} + L^{2}r_{2}^{2} + 2KLr_{1}r_{2}\cos\psi}\,\Big),\nonumber\\
\label{eq:propanglesolve2}
0 &= \int_{\mathbb{S}}\ddd\theta \, \sin(\psi_{2}-\theta)\frac{\exp\Big[ Lr_{1}\cos\theta + Kr_{2}\cos(\psi_{2} - \theta)\Big]}{\int_{\mathbb{S}}\ddd\phi\,\exp\Big[ Lr_{1}\cos\phi + Kr_{2}\cos(\psi_{2} - \phi)\Big]}\\
&= Lr_{1}\sin\psi \;W\Big(\sqrt{K^{2}r_{2}^{2} + L^{2}r_{1}^{2} + 2KLr_{1}r_{2}\cos\psi}\,\Big).\nonumber
\end{align}
Let us first consider the case when $r_{1}=0$. In this case \eqref{eq:propselfcons1} becomes 
\begin{equation}
\label{eq:propr1is0}
0 = \int_{\mathbb{S}}\ddd \theta \,\cos\theta\frac{\exp\Big[ Lr_{2}\cos(\psi_{2} - \theta)\Big]}{\int_{\mathbb{S}}\ddd\phi\,\exp\Big[ Lr_{2}\cos(\psi_{2} - \phi) \Big]}
\end{equation}
and \eqref{eq:propselfcons2} becomes
\begin{equation}
\label{eq:propr2}
r_{2} = \int_{\mathbb{S}}\ddd \theta \,\cos(\psi_{2}-\theta)\frac{\exp\Big[Kr_{2}\cos(\psi_{2} - \theta)\Big]}{\int_{\mathbb{S}}\ddd\phi\,\exp\Big[Kr_{2}\cos(\psi_{2} - \phi)\Big]} = V(Kr_{2}),
\end{equation}
which is exactly the self-consistency equation for the one-community noisy Kuramoto model without disorder, and can be divided into two cases: Either $K\leq 2$, in which case $r_{2}=0$, making $(r_{1}, r_{2})=(0, 0)$ the only stationary solution, or $K>2$, in which case there is a unique $r_{2}>0$ solving \eqref{eq:propr2}. By making the change of variable $\vartheta = \psi_{2}-\theta$ in \eqref{eq:propr1is0} and using the trigonometric identity $\cos(\psi_{2}-\vartheta) = \cos\psi_{2}\cos\vartheta + \sin\psi_{2}\sin\vartheta$ in \eqref{eq:propr1is0}, we see that \eqref{eq:propr1is0}, in this case, is only solved by $\psi_{2}=\frac{\pi}{2}$ or $\psi_{2}=\frac{3\pi}{2}$. In order to satisfy the self-consistency equations, these angles must satisfy \eqref{eq:propanglesolve1} and \eqref{eq:propanglesolve2} with $r_{1}=0$:
\begin{align}
0 &= \int_{\mathbb{S}}\ddd\theta \, \sin\theta\frac{\exp\Big[ Lr_{2}\cos(\psi_{2}-\theta)\Big]}{\int_{\mathbb{S}}\ddd\phi\,\exp\Big[Lr_{2}\cos(\psi_{2}-\theta) \Big]},\\
0 &= \int_{\mathbb{S}}\ddd\theta \, \sin(\psi_{2}-\theta)\frac{\exp\Big[Kr_{2}\cos(\psi_{2} - \theta)\Big]}{\int_{\mathbb{S}}\ddd\phi\,\exp\Big[Kr_{2}\cos(\psi_{2} - \phi)\Big]}.
\end{align}
The second equation is satisfied for all $\psi_{2}$, but the first equation is incompatible with $\psi_{2}=\frac{\pi}{2}$ as well as $\psi_{2}=\frac{3\pi}{2}$. so that the solution $r_{1}=0$ and $r_{2}>0$ is not possible, leaving only the solution $(r_{1}, r_{2})=(0, 0)$. Note that in this case the average angles are not well defined.

Let us next consider the case when $r_{1}>0$ (so that we must also have $r_{2}>0$). The allowed angles have to satisfy \eqref{eq:propanglesolve1} and \eqref{eq:propanglesolve2} simultaneously. These are satisfied only when $\sin\psi = 0$, so that $\psi\in\{0, \pi\}$.
\end{proof}

\begin{theorem}[\textbf{Critical line without disorder}]
\label{thm:zphasecritical}
Fix $\psi = \psi_{2}-\psi_{1}\in \{0, \pi\}$. Then the parameter space $\{(K, L): K, L\in \mathbb{R}^{2}\}$ splits into two regions:
\begin{enumerate}[a)]
\item In the region $K+L\cos\psi\leq 2$, there is precisely one solution: the unsynchronized solution $(r_{1}, r_{2}) = (0, 0)$.
\item In the region $K+L\cos\psi>2$, there are at least two solutions: the unsynchronized solution $(r_{1}, r_{2}) = (0, 0)$ and the symmetric synchronized solution $(r_{1}, r_{2}) = (r, r)$ for some $r\in (0, 1)$.
\end{enumerate}
\end{theorem}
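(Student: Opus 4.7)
The plan is to exploit the observation that for $\psi\in\{0,\pi\}$ the radicands in \eqref{eq:selfconsfinal} become perfect squares: writing $c:=L\cos\psi\in\{\pm L\}$, one has $K^{2}r_{1}^{2}+L^{2}r_{2}^{2}+2KLr_{1}r_{2}\cos\psi=(Kr_{1}+cr_{2})^{2}$, and analogously for the second equation. Substituting $W(|x|)=2V(|x|)/|x|$ and using the oddness of $V$ (Proposition~\ref{prop:vproperties}(7)), the self-consistency equations collapse to
\begin{align*}
r_{1}=V(Kr_{1}+cr_{2}),\qquad r_{2}=V(Kr_{2}+cr_{1}),
\end{align*}
with $V$ now regarded as an odd function on $\R$. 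This is the cleaner form I will analyze.

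For part (a), assume $K+c\leq 2$ and suppose $(r_{1},r_{2})\neq(0,0)$ is a solution. If $r_{1}=0$, then $V(cr_{2})=0$, which (since $V$ has a unique zero at $0$ and $c\neq 0$) forces $r_{2}=0$; conversely for $r_{2}=0$. So any non-trivial solution must have $r_{1},r_{2}>0$. Positivity of $V(x)$ for $x>0$ then forces $Kr_{1}+cr_{2}>0$ and $Kr_{2}+cr_{1}>0$; adding gives $(K+c)(r_{1}+r_{2})>0$, in particular $K+c>0$. Applying the strict bound $V(x)<x/2$ for $x>0$ (Proposition~\ref{prop:vproperties}(5)) to both equations and summing,
\begin{align*}
r_{1}+r_{2}<\frac{(Kr_{1}+cr_{2})+(Kr_{2}+cr_{1})}{2}=\frac{(K+c)(r_{1}+r_{2})}{2},
\end{align*}
so $K+c>2$, contradicting the hypothesis. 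Hence $(0,0)$ is the only stationary solution in this region.

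For part (b), $(0,0)$ is always a solution, so it suffices to produce a symmetric non-trivial one when $K+c>2$. The ansatz $r_{1}=r_{2}=r$ reduces the system to the scalar equation $r=V((K+c)r)$. Set $f(r):=V((K+c)r)-r$. Then $f(0)=0$, $f'(0)=(K+c)/2-1>0$ (so $f>0$ on a right neighbourhood of $0$), and by property~6 $f(r)\leq 1-r<0$ for large $r$, so by continuity $f$ has a positive zero. Strict concavity of $V$ on $[0,\infty)$ (property~4), combined with $K+c>0$, makes $r\mapsto V((K+c)r)$ strictly concave, so $f$ has at most one positive zero. The bound $r=V((K+c)r)<1$ places this unique root in $(0,1)$, delivering the symmetric synchronized solution $(r,r)$.

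The main obstacle is ensuring part~(a) rules out \emph{asymmetric} solutions uniformly across $K+c\leq 2$, including the sub-regime where $c<0$ and $|K-c|$ is large. A naive application of $|V(x)|\leq|x|/2$ only yields the weaker conclusion $K+|c|>2$. The fix is first to extract $K+c>0$ from the sign constraints enforced by $V(\cdot)>0$, and only then to sum the equations \emph{without} absolute values; this is what produces the sharp threshold $K+c>2$.
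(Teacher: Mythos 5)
Your proof is correct and follows essentially the same strategy as the paper: reduce the self-consistency equations for $\psi\in\{0,\pi\}$ to the scalar form $r_{i}=V(Kr_{i}+cr_{j})$ with $c=L\cos\psi$, use $V(x)<x/2$ to derive the critical threshold for part~(a), and use the symmetric ansatz reducing to $r=V((K+c)r)$ for part~(b). There are two places where you are slightly cleaner than the paper's own argument. First, to rule out solutions with $r_{1}=0,\ r_{2}>0$ the paper defers to the calculation in Proposition~\ref{prop:conjecture}, which is a detour about allowable phase angles; once $\psi$ is fixed in $\{0,\pi\}$ your observation that $r_{1}=0$ forces $V(cr_{2})=0$ and hence $r_{2}=0$ (since $V$ vanishes only at $0$ and $c\neq 0$) is more direct. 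Second, when the paper invokes $V(x)<x/2$ to get $r_{i}<(Kr_{i}+cr_{j})/2$, it implicitly assumes the argument $Kr_{i}+cr_{j}$ is positive; you make this explicit by noting that $r_{i}=V(Kr_{i}+cr_{j})>0$ together with oddness and monotonicity of $V$ forces $Kr_{i}+cr_{j}>0$, which is precisely what lets you avoid the lossy $K+|c|>2$ bound and obtain the sharp threshold. For part~(b) the paper simply references the one-community self-consistency equation with $K\to(K+c)/2$, while you give a self-contained intermediate-value plus strict-concavity argument that also establishes uniqueness of the positive root and places it in $(0,1)$; both are fine.
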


\begin{proof}
For part a), note that $(0, 0)$ always solves the self-consistency equations in \eqref{eq:selfconsfinal}, due to property 1 of Proposition \ref{prop:wproperties} and the fact that $a_{1}, a_{2}, b_{1}$, $b_{2}$ are zero when $(r_{1}, r_{2})=(0, 0)$. The calculation given in the proof of Proposition \ref{prop:conjecture} when $r_{1}=0$ shows that a solution of the form $r_{1}=0$ and $r_{2}>0$ is not possible, and due to symmetry the same is true for solutions with $r_{2}=0$ and $r_{1}>0$. To have strictly positive $r_{1}, r_{2}$, we use property 5 in Proposition \ref{prop:vproperties} to get 
\begin{align}
r_{1} < \frac{Kr_{1}+Lr_{2}\cos\psi}{2},\nonumber\\
r_{2} < \frac{Kr_{2}+Lr_{1}\cos\psi}{2}.
\end{align}
Adding these equations, we get
\begin{align}
\label{eq:critlineproof}
K+L\cos\psi>2,
\end{align}
which is the condition to have positive synchronized solutions and defines the critical line. Let us next consider the case $\psi=0$ and $r_{1}, r_{2}>0$. Then the self-consistency equations in \eqref{eq:selfconsfinal} reduce to 
\begin{align}
r_{1} &= \frac{(Kr_{1} + Lr_{2})}{2} W(Kr_{1}+Lr_{2}) = V(Kr_{1}+Lr_{2}),\label{eq:selfconsreduced1}\nonumber\\
r_{2} &= \frac{(Kr_{2} + Lr_{1})}{2} W(Kr_{2}+Lr_{1}) = V(Kr_{2}+Lr_{1}).
\end{align}
If we consider symmetric solutions so that $r_{1}=r_{2}=r$, then these two equations are identical and correspond to the self-consistency equation for the one-community noisy Kuramoto model with the replacement $2K\rightarrow K+L$, which has a positive solution when $K+L>2$. The same can be done when $\psi=\pi$ and yields $K-L>2$ as critical condition. 
\end{proof}

It is tempting to conclude that the two-community model is the same as the one-community model with the replacement $2K\to K+L\cos\psi$. This is, however, not the case as we will see in Section \ref{sec:nonsym}. 

\begin{figure}
\centering
\includegraphics[scale=1.7]{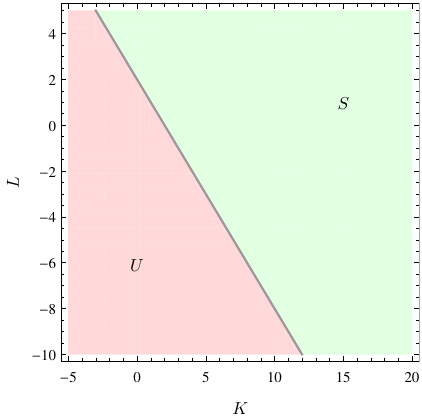}
\includegraphics[scale=1.7]{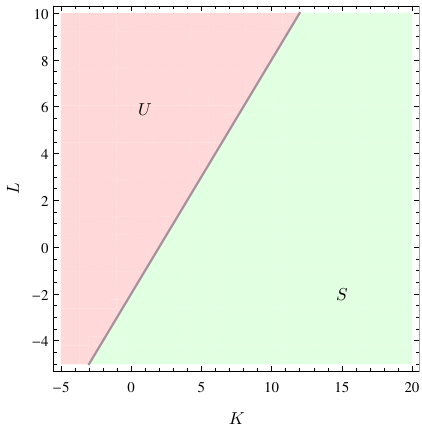}
\caption{Regions appearing in Theorem \ref{thm:zphasecritical} $\psi=0$ (left) $\psi=\pi$ (right). Part a): the red region (labeled by a $U$); part b): the green region (labeled by an $S$). }
\label{fig:phasediagrampsi}
\end{figure}

\subsection{With disorder}
\label{sec:withdisorder}
In this section we identify the critical line when we include disorder. We simplify the system by taking the distributions from which the natural frequencies are drawn in the two communities to be the same, i.e., $\mu_{1}=\mu_{2}=\mu$. Then the self-consistency equations in \eqref{eq:thmselfcons} read
\begin{align}
\label{eq:selfconsdisorder}
r_{1} &= V_{1}^{\mu}(r_{1}, r_{2})=\int_{\mathbb{S}}\ddd\theta\int_{\mathbb{R}}\mu(\ddd\omega)\cos(\psi_{1}-\theta)\, p_{1}(\theta, \omega),\nonumber\\
r_{2} &= V_{2}^{\mu}(r_{1}, r_{2})=\int_{\mathbb{S}}\ddd\theta\int_{\mathbb{R}}\mu(\ddd\omega)\cos(\psi_{2}-\theta)\, p_{2}(\theta, \omega),\\
0 &= U_{1}^{\mu}(r_{1}, r_{2}) := \int_{\mathbb{S}}\ddd\theta\int_{\mathbb{R}}\mu(\ddd\omega)\sin(\psi_{1}-\theta)p_{1}(\theta, \omega),\nonumber\\
0 &= U_{2}^{\mu}(r_{1}, r_{2}) := \int_{\mathbb{S}}\ddd\theta\int_{\mathbb{R}}\mu(\ddd\omega)\sin(\psi_{2}-\theta)p_{2}(\theta, \omega).\nonumber
\end{align}
In light of Conjecture \ref{con:phasedifference} we will restrict the following theorem to the two cases $\psi=0$ and $\psi=\pi$. Define
\begin{align}
\chi=\int_{\mathbb{R}}\mu(\ddd\omega)\,\frac{1}{2(1+4\omega^{2})}.
\end{align}

\begin{conjecture}[\textbf{Critical line with disorder}]
\label{thm:critcondisorder}
Fix $\psi=\psi_{2}-\psi_{1}\in \{0, \pi\}$. If the disorder in the two communities is drawn from a symmetric unimodal distribution $\mu$, then the parameter space $\{(K, L):K, L \in \mathbb{R}^{2}\}$ splits into two regions:
\begin{enumerate}[a)]
\item In the region $K+L\cos\psi\leq \chi^{-1}$, there is precisely one solution: the unsynchronized solution $(r_{1}, r_{2}) = (0, 0)$.
\item In the region $K+L\cos\psi>\chi^{-1}$, there are at least two solutions: the unsynchronized solution $(r_{1}, r_{2}) = (0, 0)$ and the symmetric synchronized solution $(r_{1}, r_{2}) = (r, r)$ for some $r\in (0, 1)$. 
\end{enumerate}
\end{conjecture}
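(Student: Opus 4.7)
The strategy rests on two reductions. First, for $\psi\in\{0,\pi\}$ and $\mu$ symmetric, the parity $(\theta,\omega)\mapsto(-\theta,-\omega)$ forces the third and fourth equations of \eqref{eq:selfconsdisorder} to be automatically satisfied once we set $\psi_1=0$, so the system collapses to the two scalar equations $r_i=V_i^\mu(r_1,r_2)$, $i=1,2$. Second, along the diagonal $r_1=r_2=r$ the drifts in both communities collapse to
\[
v(\theta,\omega)=\omega-\tfrac12(K+L\cos\psi)\,r\sin\theta,
\]
which is exactly the drift of the one-community noisy Kuramoto model with coupling $K_{\mathrm{eff}}=(K+L\cos\psi)/2$ and noise $D=1$. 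By the classical one-community theory for a symmetric unimodal $\mu$, the reduced fixed-point equation has a positive solution $r\in(0,1)$ if and only if $K_{\mathrm{eff}}>1/(2\chi)$, i.e.\ $K+L\cos\psi>\chi^{-1}$; this yields the existence statement in part~b).

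To locate $\chi$ from the PDE and set up part~a), I would linearize the map $T\colon(r_1,r_2)\mapsto(V_1^\mu(r_1,r_2),V_2^\mu(r_1,r_2))$ at the origin. Writing $p_m(\theta,\omega)=(2\pi)^{-1}(1+f_m)$ and keeping only terms linear in $(r_1,r_2)$, the stationary Fokker--Planck equation with $D=1$ reduces to $\tfrac12\partial_\theta^2 f_m-\omega\partial_\theta f_m=-c_m\cos\theta$, where $c_1=(Kr_1+Lr_2\cos\psi)/2$ and $c_2$ is the analogue with $r_1,r_2$ swapped. The ansatz $f_m=a_m(\omega)\cos\theta+b_m(\omega)\sin\theta$ yields $a_m(\omega)=2c_m/(1+4\omega^2)$, and integrating against $\mu(\ddd\omega)$ and $\cos(\psi_m-\theta)\,\ddd\theta$ gives the Jacobian
\[
J(0,0)=\chi\begin{pmatrix}K & L\cos\psi\\ L\cos\psi & K\end{pmatrix},
\]
with eigenvalues $\chi(K\pm L\cos\psi)$ on eigenvectors $(1,1)$ and $(1,-1)$. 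The symmetric eigenvalue $\chi(K+L\cos\psi)$ crosses $1$ precisely at the claimed critical condition, confirming the location of the bifurcation and, combined with the concavity from Proposition~\ref{prop:vproperties}, forcing every fixed point of $T$ in a small neighborhood of the origin to be trivial whenever $\chi(K+L\cos\psi)<1$.

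The real obstacle is the global uniqueness claim in part~a). A natural plan is to upgrade the local result by exploiting the concavity of $V^\mu$ and a componentwise monotone-map argument to show $V_i^\mu(r_1,r_2)<r_i$ along every ray from the origin whenever $\chi(K+L\cos\psi)\le 1$. The delicate point is the second eigenvalue $\chi(K-L\cos\psi)$: provided $\psi$ is chosen with the ``correct'' sign (i.e.\ $\psi=0$ for $L>0$ and $\psi=\pi$ for $L<0$, matching the intuition following Conjecture~\ref{con:phasedifference}), this antisymmetric eigenvalue is dominated by $\chi(K+L\cos\psi)$ and the contraction picture is consistent; but without such an implicit selection the $(1,-1)$ direction can be locally expanding in the subcritical region, so a naive product-norm contraction fails, and one would need to leverage unimodality of $\mu$ to exclude any off-diagonal fixed points. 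Turning this plausibility into a rigorous diagonal-concentration argument is, I expect, exactly why the statement is posed as a conjecture rather than a theorem.
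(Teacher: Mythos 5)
Your proposal follows essentially the same route as the paper's heuristic proof: linearize the self-consistency map at the origin to locate the critical line in terms of $\chi$, and reduce along the diagonal $r_1=r_2$ to the one-community self-consistency equation with effective coupling $K+L\cos\psi$ to obtain the symmetric synchronized branch in part b). The one genuine difference is that you compute the full Jacobian explicitly by solving the linearized Fokker--Planck equation, obtaining $J(0,0)=\chi\bigl(\begin{smallmatrix}K & L\cos\psi\\ L\cos\psi & K\end{smallmatrix}\bigr)$ with eigenvalues $\chi(K\pm L\cos\psi)$, whereas the paper differentiates the self-consistency integrals, adds the two linearized equations, and reads off only the $(1,1)$ eigenvalue. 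This pays off: you correctly observe that adding the equations silently discards the antisymmetric eigenvalue $\chi(K-L\cos\psi)$, which can exceed $1$ in the nominally subcritical region $K+L\cos\psi\le\chi^{-1}$ whenever $L\cos\psi<0$, so the paper's claim that ``below the critical line the self-consistency equations are a contraction'' is not justified by the linearization alone. The paper tries to patch this by invoking a conjectured concavity of $V_i^\mu$ (from \cite{L12a}), but only along the symmetric direction; your framing makes explicit that the real obstruction is the $(1,-1)$ direction, and your suggested resolution --- that the statement implicitly pairs $\psi=0$ with $L>0$ and $\psi=\pi$ with $L<0$, as in Conjecture \ref{con:phasedifference} --- is the natural reading. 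Both you and the paper flag that ruling out solutions of the form $r_1=0$, $r_2>0$ is the other open point. In short, same skeleton, but your version localizes the gap more precisely.
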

\begin{hproof}
Following the method used in \cite{SA88} for the one-community model, we Taylor expand the self-consistency equations for $r_{1}$ and $r_{2}$ in the two variables $r_{1}$ and $r_{2}$. The equations in \eqref{eq:selfconsdisorder} read, to first order,
\begin{align}
r_{1} &= V_{1}^{\mu}(0, 0) +\partial_{r_{1}}V_{1}^{\mu}(r_{1}, r_{2})|_{(r_{1}, r_{2}) = (0, 0)}r_{1} + \partial_{r_{2}}V_{1}^{\mu}(r_{1}, r_{2})|_{(r_{1}, r_{2}) = (0, 0)}r_{2} + O(r_{1}^{2}+r_{2}^{2}),\\
r_{2} &= V_{2}^{\mu}(0, 0) +\partial_{r_{1}}V_{2}^{\mu}(r_{1}, r_{2})|_{(r_{1}, r_{2}) = (0, 0)}r_{1} + \partial_{r_{2}}V_{2}^{\mu}(r_{1}, r_{2})|_{(r_{1}, r_{2}) = (0, 0)}r_{2} + O(r_{1}^{2}+r_{2}^{2}).\nonumber
\end{align}
We can verify that $V_{1}^{\mu}(0, 0)= V_{2}^{\mu}(0, 0)=0$, and calculate the derivatives at zero. This leads to 
\begin{align}
r_{1} &= r_{1}K\chi + r_{2}\int_{\mathbb{R}}\mu(\ddd\omega)\,\frac{L(\cos(\psi_{1}-\psi_{2})+2\omega\sin(\psi_{1}-\psi_{2}))}{2(1+4\omega^{2})}+ O(r_{1}^{2}+r_{2}^{2}),\\
r_{2} &= r_{2}K\chi + r_{1}\int_{\mathbb{R}}\mu(\ddd\omega)\,\frac{L(\cos(\psi_{2}-\psi_{1})+2\omega\sin(\psi_{2}-\psi_{1}))}{2(1+4\omega^{2})}+ O(r_{1}^{2}+r_{2}^{2})\nonumber.
\end{align}
Adding these equations, we get
\begin{align}
\label{eq:disordercritproof}
r_{1}+r_{2} =& \;(r_{1}+r_{2})(K+L\cos(\psi_{1}-\psi_{2}))\,\chi\nonumber\\
 &+ (r_{2}-r_{1})2L\sin(\psi_{1}-\psi_{2})\int_{\mathbb{R}}\mu(\ddd\omega)\,\frac{\omega}{2(1+4\omega^{2})}+ O(r_{1}^{2}+r_{2}^{2}).
\end{align}
Since we are considering the case where $\mu$ is symmetric, the last term vanishes and we obtain the critical line in Theorem~\ref{thm:critcondisorder}. This shows that below the critical line the self-consistency equations are a contraction, making $(r_{1}, r_{2})=(0, 0)$ a fixed point. In order to show that solutions of the form $r_{1}=0$ and $r_{2}>0$ are not possible, we would have to repeat the calculation used in the proof of Proposition \ref{prop:conjecture} for the general case. This turns out to be non-trivial, but we expect that it is possible to prove this for symmetric, unimodal $\mu$ by proving that $p_{1}(\theta+\psi_{2}, \omega)=p_{1}(-\theta+\psi_{2}, -\omega)$ (in the case that $\psi_{1}=0$) and using this symmetry to show that the first and third equation in \eqref{eq:thmselfcons} cannot be simultaneously satisfied when $r_{1}=0$ and $r_{2}>0$. If $\mu$ is symmetric and unimodal, then it is conjectured that the analog of $V_{1}^{\mu}(r_{1}, r_{2})$ and $V_{2}^{\mu}(r_{1}, r_{2})$ in the one-community noisy Kuramoto model is concave [see Conjecture [3.12], Chapter 3 in \cite{L12a}]. We assume that this conjecture also holds in this case for both $V_{1}^{\mu}(r_{1}, r_{2})$ and $V_{2}^{\mu}(r_{1}, r_{2})$, at least for symmetric solutions. In the case $\psi=0$ the symmetric solution $r_{1}=r_{2}>0$ reduces the system of self-consistency equations in \eqref{eq:selfconsdisorder} to a single equation that is analogous to the one-community noisy Kuramoto model self-consistency equation \cite[Proposition 3.10, Chapter 3]{L12a} with the replacement $K\rightarrow K+L$. In the case $\psi=\pi$ we can perform a change of variable in the integral of the second line \eqref{eq:selfconsdisorder}, namely, $\phi = \psi_{2}-\theta$, to see that the equations again reduce to the equation for the one-community case with the replacement $K\rightarrow K-L$. Thus, we see that in both cases we can apply the conjecture in \cite[Conjecture 3.12]{L12a} to ensure that the line $K+L\cos\psi = \chi^{-1}$ is the critical condition for symmetrically synchronized solutions, which settles the conditions in a) and b).

For part b), we must still show that the symmetric solution is possible above the critical line. Due to the reduction of the system to the one-community noisy Kuramoto model, both for $\psi=0$ and for $\psi=\pi$, we see that the symmetric solution indeed exists above the critical line.
\end{hproof}

\section{Bifurcation of non-symmetric solutions}
\label{sec:nonsym}
In this section we consider the system with the same parameter specifications and simplifications as in Section \ref{sec:critcon}, but without disorder and with $\psi=0$. The analysis with $\psi=\pi$ carries over after the replacement $L\to -L$ in the self-consistency equations in \eqref{eq:selfconsreduced1} (the resulting modified phase diagram is shown in the right panel of Fig.~\ref{fig:asymphasediagram}). The proofs in this section rely on numerics. 

The self-consistency equations can be visualized as a vector field, in which the solutions to the equations appear as fixed points, by plotting
\begin{equation}
\label{eq:vectorfield}
\vec{V}_{r_{1}, r_{2}} = (V(Kr_{1}+Lr_{2})-r_{1}, V(Kr_{2} + Lr_{1})-r_{2}).
\end{equation}
For a certain range of parameters non-symmetric solutions appear, as seen in Fig.~\ref{fig:unstablevectorfield}. The non-symmetric solutions appear to be saddle-points, having a stable and an unstable manifold under the vector field representing the self-consistency equations. Note that this vector field does not represent the dynamics of the system, since the self-consistency equations contain only the stationary densities. By plotting the possible solutions as functions of $K$ while keeping $L$ fixed, we see that the non-symmetric solutions bifurcate from the symmetric solutions, as is seen in Fig.~\ref{fig:rsolutiontypes} for the case where $L=-2$. The symmetric solutions correspond to equal amounts of synchronization in the two communities. This is also the only solution possible between $K=4$ and $K=4.9953\ldots$. At $K=4.9953\ldots$, the non-symmetric solutions appear, corresponding to one community having a larger synchronization level than the other community. Due to the symmetry of the system, both communities can have a higher level of synchronization in the non-symmetric solution.

\begin{figure}
\centering
\includegraphics[scale=2.0]{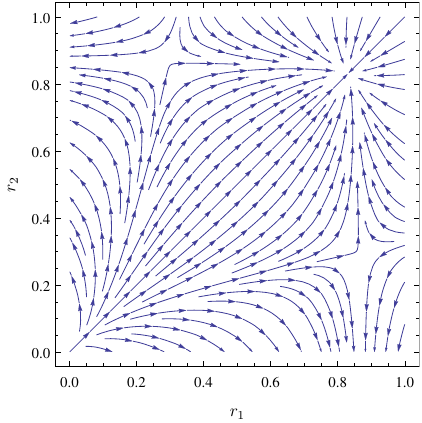}
\caption{Self-consistency vector field \eqref{eq:vectorfield} for $K=5$ and $L=-1$.}
\label{fig:unstablevectorfield}
\end{figure}

\begin{figure}
\centering
\includegraphics[scale=2.0]{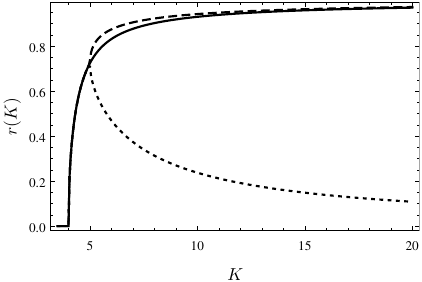}
\caption{Solutions to the self-consistency equations in \eqref{eq:selfconsfinal} for different values of $K$ when $L=-2$. Drawn are the pairs of symmetric solutions (solid), and the pairs of non-symmetric solution (dashed and dotted).}
\label{fig:rsolutiontypes}
\end{figure}

In Section~\ref{sec:existandchar} we prove a necessary and sufficient condition for the existence of non-symmetric solutions. In Section~\ref{sec:ordering} we show that the non-symmetric solutions are ordered and are such that the symmetric solution is wedged in between the two non-symmetric solutions. In Section~\ref{sec:properties} we analyze the (asymptotic) properties of the bifurcation line as well as the synchronization level along the bifurcation line.

\subsection{Existence and characterization of non-symmetric solutions}
\label{sec:existandchar}
 
\begin{theorem}[\textbf{Characterization of the  bifurcation line}]
The existence of non-symmetric solutions requires $L<0$, in which case the bifurcation point $K^{*}=K^{*}(L)$ is the unique solution to the equation
\begin{align}
\sqrt{1 - \frac{2K}{K^{2}-L^{2}}} = V\Bigg((K+L)\sqrt{1 - \frac{2K}{K^{2}-L^{2}}}\;\Bigg)\label{eq:criticalselfcon},
\end{align} 
and the synchronization level at the bifurcation point is given by
\begin{align}
r^{*}(K^{*}, L) = \sqrt{1 - \frac{2K^{*}}{{K^{*}}^{2}-L^{2}}}.\label{eq:criticalr}
\end{align}
\end{theorem}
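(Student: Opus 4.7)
My strategy is to detect the bifurcation from the symmetric branch $r_1 = r_2 = r$ by isolating an antisymmetric mode via subtraction, and then to collapse the resulting pair of scalar equations into a single transcendental equation using a standard identity for $V'$. Concretely, I will start by subtracting the two equations in \eqref{eq:selfconsreduced1} and applying the mean value theorem to $V$, obtaining
\[
r_1 - r_2 = V'(\xi)(K-L)(r_1 - r_2)
\]
for some $\xi$ between $Kr_1 + Lr_2$ and $Kr_2 + Lr_1$. For a non-symmetric solution ($r_1 \neq r_2$) this forces $V'(\xi) = 1/(K-L)$, and letting such a branch collapse onto the symmetric branch sends $\xi \to (K+L)r$, so the bifurcation point is characterized by the pair
\[
V'((K+L)r) = \frac{1}{K-L}, \qquad r = V((K+L)r) \qquad (\dagger).
\]

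The sign restriction $L < 0$ is then immediate from concavity of $V$. Since $V(0)=0$ and $V$ is strictly concave on $[0,\infty)$ by Proposition \ref{prop:vproperties}, one has $V'(x) < V(x)/x$ for every $x>0$. Applied at $x=(K+L)r$, this gives $V'((K+L)r)<1/(K+L)$. Substituting into $(\dagger)$ yields $1/(K-L)<1/(K+L)$, and because both $K-L>0$ (needed for $V'(\xi)=1/(K-L)>0$) and $K+L>2$ (Theorem \ref{thm:zphasecritical}) are positive, this reduces to $L<0$.

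For the explicit form of $r^*$ and $K^*$ the main tool is the identity
\[
V'(x) = 1 - \frac{V(x)}{x} - V(x)^2, \qquad x>0,
\]
which I will derive by writing $V(x) = (\log Z)'(x)$ with $Z(x) = \int_{\mathbb{S}} \eee^{x\cos\theta}\,\ddd\theta$, so that $V'(x) = Z''(x)/Z(x) - V(x)^2$, and then using $\cos^2\theta = 1 - \sin^2\theta$ together with a single integration by parts (via $\frac{\ddd}{\ddd\theta}\eee^{x\cos\theta} = -x\sin\theta\,\eee^{x\cos\theta}$) to show $Z''(x)/Z(x) = 1 - V(x)/x$. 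Plugging this identity into $(\dagger)$ with $V((K+L)r) = r$ gives
\[
\frac{1}{K-L} = 1 - \frac{1}{K+L} - r^2,
\]
whence $r^2 = 1 - 2K/(K^2 - L^2)$, which is \eqref{eq:criticalr}; re-inserting this $r$ into $r = V((K+L)r)$ yields \eqref{eq:criticalselfcon}.

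The step I expect to be the main obstacle is uniqueness of $K^*(L)$. A natural route is to parametrize the bifurcation curve by $x := (K+L)r > 0$ and solve $(\dagger)$ for $K$ and $L$, obtaining
\[
K(x) = \tfrac{1}{2}\bigl(1/V'(x) + x/V(x)\bigr), \qquad L(x) = \tfrac{1}{2}\bigl(x/V(x) - 1/V'(x)\bigr),
\]
from which one checks $L(x) \to 0^-$ as $x \to 0^+$ and $L(x) \to -\infty$ as $x \to \infty$. Monotonicity of $L(x)$ in $x$ would immediately give uniqueness of $x$ (hence of $K^*$) for each prescribed $L<0$, and differentiation reduces it to the inequality $-V''(x) V(x)^2 > (V(x) - x V'(x)) V'(x)^2$ for $x>0$. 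This inequality does not seem to admit a short closed-form argument and is the natural candidate for the numerical verification the paper invokes.
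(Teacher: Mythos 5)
Your proposal is correct and arrives at the same pair of bifurcation conditions as the paper, but via a route that is cleaner in a couple of places and worth contrasting. Where the paper perturbs the symmetric branch, Taylor expands, and obtains $L C^{*} = \pm(K C^{*}-1)$ (then rules out the sign corresponding to $(K+L)V'((K+L)r^{*})=1$ by a separate concavity argument), your mean-value-theorem subtraction $r_1 - r_2 = V'(\xi)(K-L)(r_1-r_2)$ gives $V'(\xi) = 1/(K-L)$ directly and exactly for any non-symmetric solution, with no spurious sign branch to discard; the collapse $\xi\to(K+L)r$ then recovers the pair $(\dagger)$. Your derivation of $V'(x) = 1 - V(x)/x - V(x)^2$ by one integration by parts is a self-contained replacement for the paper's citation of \cite[Eq.~(2.21)]{G10}. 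For uniqueness, you parametrize the bifurcation curve by $x=(K+L)r$ and reduce the claim to monotonicity of $L(x)$, which directly addresses uniqueness of $K^*$ \emph{given $L$}; the paper instead fixes $r$, shows $K\mapsto V(f_r(K)r)$ is monotone and wedged, proves uniqueness of $K^*$ \emph{given $r$}, and only later (via $\partial r^*/\partial K>0$ in Theorem~\ref{lemma:rprop}) closes the loop. Your route is more economical there. Both your monotonicity inequality $-V''(x)V(x)^2 > (V(x)-xV'(x))V'(x)^2$ and the paper's wedge bounds ultimately lean on numerics, so neither is fully rigorous at that step. One caveat you share with the paper: the claim that ``non-symmetric solutions require $L<0$'' is established only at the bifurcation point (i.e., assuming non-symmetric branches arise by bifurcating off the symmetric one); your MVT relation holds for every non-symmetric solution, but by itself it yields $K-L>2$, not $L<0$, so this implicit assumption is present in both arguments.
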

\begin{proof}
We assume that a non-zero symmetric solution exists, so that $r_{1} = r_{2}= r$ and $r = V((K+L)r)$, which is the case when $K+L>2$. Let $(K^{*},r^{*})$ be a bifurcation point for fixed $L$. We will show via a perturbation argument that this bifurcation point exists and is unique. At the bifurcation point the non-symmetric solutions split off from the symmetric solution since $V$ is  continuous. This allows us to perform a perturbation around $r^{*}$, namely,
\begin{align}
r^{*}+\epsilon &= V(K(r^{*}+\epsilon) + L(r^{*}-\delta)),\\
r^{*}-\delta &= V(K(r^{*}-\delta) + L(r^{*}+\epsilon)),
\end{align}
where $\epsilon$ and $\delta$ are small, either positive or negative, and are related, as will be shown shortly. We Taylor expand around the point $(K+L)r^{*}$ and use $r^{*} = V((K+L)r^{*})$, to get
\begin{align}
\label{eq:perturbedselfcon}
\epsilon \sim (K\epsilon -L\delta)V'((K+L)r^{*}),\quad -\delta \sim (L\epsilon - K\delta)V'((K+L)r^{*}), \quad \epsilon, \delta\downarrow 0,
\end{align}
where by $\sim$ (here and in the rest of the paper) we mean that the ratio tends to 1 asymptotically. Abbreviate $C^{*}=V'((K+L)r^{*})$. Then the equations in \eqref{eq:perturbedselfcon} combine to give
\begin{align}
\epsilon \sim \Big(\frac{LC^{*}}{KC^{*}-1}\Big)^{2}\epsilon,
\end{align}
which implies
\begin{align}
LC^{*} = \pm(KC^{*} - 1),
\end{align}
and $\epsilon\sim \pm \delta$. Using the negative sign would require the following two equations to be satisfied:
\begin{align}
r^{*} &= V((K+L)r^{*}),\\
\frac{1}{K+L} &= V'((K+L)r^{*}).
\end{align} 
However, these equations cannot be satisfied simultaneously with $r^{*}>0$. Indeed, the first finds the intersection point of $V$ with the line of slope $\frac{1}{K+L}$ passing through zero. But due to properties 1 and 4 we know that $V$ has slope $\frac{1}{K+L}$ before this intersection point. Thus, the two equations that must be satisfied at the bifurcation point are
\begin{align}
r^{*} &= V((K+L)r^{*})\label{eq:proofcritselfcon},\\
\frac{1}{K-L} &= V'((K+L)r^{*}).\label{eq:proofcritselfVprime}
\end{align} 
For fixed $L$, these equations determine both the value $r^{*}=r^{*}(L)$ of the synchronization level at the bifurcation point and the internal coupling strength $K^{*}=K^{*}(L)$ at which the bifurcation occurs. The first equation finds the intersection point of $V$ and the line with slope $\frac{1}{K+L}$ passing through zero. The second equation requires the derivative of $V$ at this point to be $\frac{1}{K-L}$. Due to the concavity of $V$ (Property 4 of Proposition \ref{prop:vproperties}), this gives the relation 
\begin{align}
\frac{1}{K+L}>\frac{1}{K-L},
\end{align}
which implies that $L<0$, as claimed. To visualize the procedure for determining the bifurcation point, we plot the appropriate lines in Fig.~\ref{fig:bifurccomic}. It is clear that the slope of the thickly dashed line must be less than that of the solid line, which gives $L<0$.

\begin{figure}
\centering
\includegraphics[scale=2.0]{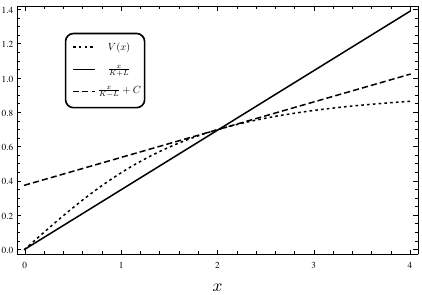}
\caption{Visualization of the procedure to determine the bifurcation point. (Here $C$ is a constant determined in order to plot the tangent line and is not $C^{*}$.)}
\label{fig:bifurccomic}
\end{figure}

We can find an expression for the derivative of $V$ in \eqref{eq:Vdefinition} by writing
\begin{align}
V'(x) = \frac{\int_{\mathbb{S}}\ddd \theta\,\cos^{2}\theta\, \eee^{x \cos\theta}}{\int_{\mathbb{S}}\ddd \theta\, \eee^{x \cos\theta}} - V^{2}(x).
\end{align}
For the first term in the right-hand side we can use the identity from  \cite[Eq.~(2.21)]{G10}, so that in our case
\begin{align}
\label{eq:giacominvprime}
V'((K+L)r^{*}) = 1-\frac{1}{K+L} - (r^{*})^{2},
\end{align}
where we have used \eqref{eq:proofcritselfcon} for the second term. This reduces \eqref{eq:proofcritselfVprime} to 
\begin{align}
\label{eq:proofcriticalr}
r^{*}(K, L) = \sqrt{1 - \frac{2K}{K^{2}-L^{2}}}.
\end{align}
To find $r^{*}=r^{*}(K^{*}, L)$, we must find $K^{*} = K^{*}(L)$ that solves \eqref{eq:proofcritselfcon}. Substituting  \eqref{eq:proofcriticalr} into \eqref{eq:proofcritselfcon}, we obtain \eqref{eq:criticalselfcon}.

We will first prove that, given $r$, there is a unique $K^{*}$. In order to do this, we solve the equation
\begin{align}
r = \sqrt{1-\frac{2K}{K^{2}-L^{2}}}
\end{align}
for $L$ to find
\begin{align}
\label{eq:Lsolved}
L = -\sqrt{K^{2} - \frac{2K}{1-r^{2}}},
\end{align}
where we have taken the negative since we are dealing with the case $L<0$. In order to have a real solution, we require 
\begin{align}
\label{eq:Kcondition}
K> \frac{2}{1-r^{2}}.
\end{align}
The equation for the bifurcation point in \eqref{eq:criticalselfcon} reads 
\begin{align}
\label{eq:frselfcons}
V(f_{r}(K)r) = r,
\end{align}
where 
\begin{align}
f_{r}(K) = K - \sqrt{K^{2} - \frac{2K}{1-r^{2}}}.
\end{align}
Clearly, $K\mapsto f_{r}(K)$ is strictly decreasing on $(\frac{2}{1-r^{2}}, \infty)$ for $r\in (0, 1)$. Since $x\mapsto V(x)$ is strictly increasing, $K\mapsto V(f_{r}(K)r)$ is strictly decreasing on $(\frac{2}{1-r^{2}}, \infty)$. However, in order to satisfy \eqref{eq:frselfcons} with $r\in (0, 1)$, by property 5 in Proposition \ref{prop:vproperties}, we must have 
\begin{align}
f_{r}(K)>2,
\end{align}
i.e.,
\begin{align}
&K\in\Big(\frac{2}{1-r^{2}},\frac{2(1-r^{2})}{1-2r^{2}}\Big),\quad r\in\Big(0, \frac{1}{\sqrt{2}}\Big),\\
&K\in\Big(\frac{2}{1-r^{2}},\infty\Big),\quad r\in\Big[\frac{1}{\sqrt{2}}, 1\Big).
\end{align}
Moreover,
\begin{align}
\lim_{K\rightarrow \infty}f_{r}(K) = \frac{1}{1-r^{2}}.
\end{align}  
For fixed $r\in(0, 1/\sqrt{2})$, $V(f_{r}(K)r)$ decreases from $V(\frac{2r}{1-r^{2}})$ to $V(2r)$ as $K$ increases from $\frac{2}{1-r^{2}}$ to $\frac{2(1-r^{2})}{1-2r^{2}}$ while for $r\in[1/\sqrt{2}, 1)$, $V(f_{r}(K)r)$ decreases from $V(\frac{2r}{1-r^{2}})$ to $V(\frac{r}{1-r^{2}})$ as $K$ increases from $\frac{2}{1-r^{2}}$ to $\infty$. In order to prove uniqueness, we need to show that
\begin{align}
&V\Big(\frac{2r}{1-r^{2}}\Big)>r>V(2r),\quad r\in (0, 1/\sqrt{2}),\label{eq:bounds1}\\
&V\Big(\frac{2r}{1-r^{2}}\Big)>r>V\Big(\frac{r}{1-r^{2}}\Big),\quad  r\in [1/\sqrt{2}, 1)\label{eq:bounds2}.
\end{align}
Uniqueness follows because is be a unique $K^{*}$ satisfying \eqref{eq:proofcriticalr}, due to $V$ decreasing continuously from the upper to the lower bounds in \eqref{eq:bounds1} and \eqref{eq:bounds2} and the line $r$ being wedged between the bounds (note that $r\mapsto V(f_{r}(K)r)$ intersects $r$ exactly once). The curves $V(\frac{2r}{1-r^{2}})$, $r$, $V(\frac{r}{1-r^{2}})$ are plotted numerically in Fig.~\ref{fig:uniquenessr}, which shows that the bounds in \eqref{eq:bounds2} and the upper bound in \eqref{eq:bounds1} hold for all $r\in (0, 1)$. The lower bound in \eqref{eq:bounds1} is immediate from property 5 in Proposition \ref{prop:vproperties}. Indeed, we see that the bifurcation point exists and that $K^{*}$ is unique given $r$. We will show later that $r^{*}$ is also unique given $K$ by showing that $\frac{\partial r^{*}}{\partial K}>0$ in Theorem ~\ref{lemma:rprop}.
\end{proof}

\begin{figure}[H]
\centering
\includegraphics[scale=2.0]{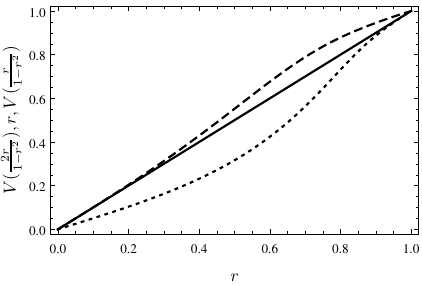}
\caption{Plot via MATHEMATICA of $V(\frac{2r}{1-r^{2}})$ (dashed), $r$ (solid) and $V(\frac{r}{1-r^{2}})$ (dotted) as functions of $r$.}
\label{fig:uniquenessr}
\end{figure} 

The uniqueness of the bifurcation point corroborates the picture in Fig.~\ref{fig:rsolutiontypes}.

\begin{remark}
Note that \eqref{eq:criticalselfcon} can also be solved for $L^{*}=L^{*}(K)$. The way this should be understood is that, after one of the variables $K$ and $L$ is fixed, the bifurcation point for the other variable is determined. A plot of the bifurcation point as a function of $K$ and $L$ is shown in Fig.~\ref{fig:bifurcationpoint}. 
\end{remark}

\begin{figure}[H]
\centering
\includegraphics[scale=2.0]{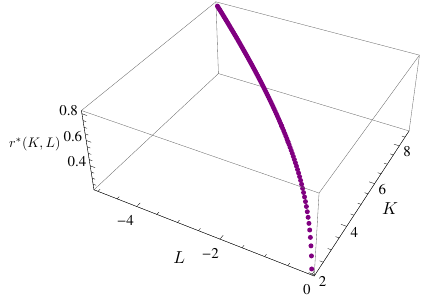}
\caption{Plot of $(K, L)\mapsto r^{*}(K, L)$ along the critical line.}
\label{fig:bifurcationpoint}
\end{figure}

\subsection{Ordering of non-symmetric solutions}
\label{sec:ordering}

Due to the symmetry of the system, if $(r_{1}, r_{2})$ is a solution to \eqref{eq:selfconsfinal} with $\psi=0$, then so is $(r_{2}, r_{1})$. When non-symmetric solutions exist, we have the following ordering of the synchronization levels in the two communities.
\begin{theorem}[\textbf{Ordered solutions}]
Fix $L$ and take $K>K^{*}$ where $K^{*}$ is the bifurcation point obtained by solving \eqref{eq:criticalselfcon}. Furthermore take only positive solutions so that $r_{1}, r_{2}, r>0$. Without loss of generality, consider a non-symmetric solution with $r_{1}>r_{2}$. Then
\begin{align}
r_{2}<r<r_{1}.
\end{align}
\end{theorem}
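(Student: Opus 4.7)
The plan is to use the self-consistency equations together with the sign of $L$ (which the previous theorem forces to be negative) to show that the right-hand side of the non-symmetric self-consistency equation for $r_1$ strictly dominates the right-hand side of the symmetric one evaluated at $r_1$, and analogously to get strict domination in the opposite direction for $r_2$. Concretely, since $L<0$ and $r_1>r_2>0$,
\begin{equation*}
Kr_1+Lr_2 - (K+L)r_1 \;=\; L(r_2-r_1) \;>\; 0,\qquad Kr_2+Lr_1 - (K+L)r_2 \;=\; L(r_1-r_2) \;<\; 0,
\end{equation*}
and by the strict monotonicity of $V$ (Proposition~\ref{prop:vproperties}, property~3) applied to the self-consistency equations in \eqref{eq:selfconsreduced1},
\begin{equation*}
r_1 \;=\; V(Kr_1+Lr_2) \;>\; V\bigl((K+L)r_1\bigr),\qquad r_2 \;=\; V(Kr_2+Lr_1) \;<\; V\bigl((K+L)r_2\bigr).
\end{equation*}

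The second step is to turn these two one-sided inequalities into the desired sandwich. Consider the auxiliary function $h(x) := V((K+L)x) - x$ on $(0,\infty)$. By $V(0)=0$, $V'(0)=\tfrac12$ and strict concavity (properties 1, 2, 4), the function $x\mapsto V((K+L)x)/x$ is strictly decreasing, so the line $y=x$ and the curve $y=V((K+L)x)$ meet at exactly one positive point, which is precisely the symmetric fixed point $r>0$; moreover $h>0$ on $(0,r)$ and $h<0$ on $(r,\infty)$. The first inequality above says $h(r_1)<0$, hence $r_1>r$; the second says $h(r_2)>0$, hence $r_2<r$. Combining gives $r_2<r<r_1$.

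I do not expect any real obstacle here — the argument is a two-line comparison once one notices that $L<0$ (established in the previous theorem) turns the ``cross'' term $Lr_2$ in $Kr_1+Lr_2$ into something strictly larger than $Lr_1$, and strict concavity of $V$ provides the required uniqueness and sign pattern for $h$. The only technical care needed is to invoke the hypothesis $K>K^{*}$ (so that both a symmetric solution $r>0$ and a non-symmetric pair actually exist, by the characterization of the bifurcation line), and to note that strict monotonicity of $V$ together with the strict inequality $r_1>r_2$ gives strict — not merely weak — comparisons, ruling out the boundary cases $r_1=r$ or $r_2=r$.
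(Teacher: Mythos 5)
Your proof is correct and follows essentially the same route as the paper: both exploit $L<0$ (from the bifurcation-line theorem) to show $Kr_1+Lr_2>(K+L)r_1$ and $Kr_2+Lr_1<(K+L)r_2$, then use strict monotonicity of $V$. You are somewhat more explicit than the paper in the final step — the paper jumps from $r_1=V(Kr_1+Lr_2)$ with effective coupling $K+L\,r_2/r_1>K+L$ directly to $r_1>r$ citing only monotonicity of $V$, whereas you correctly spell out the needed sign pattern of $h(x)=V((K+L)x)-x$ (positive on $(0,r)$, negative on $(r,\infty)$) using $V(0)=0$, $V'(0)=\tfrac12$, and strict concavity to get uniqueness of the positive fixed point; this is the fact the paper's terse wording implicitly relies on, and your version is the cleaner statement of it.
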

\begin{proof}
The symmetric solution $r$ solves the equation
\begin{align}
r = V(r(K+L)).
\end{align}
To prove that $r<r_{1}$, we consider the self-consistency equation \eqref{eq:selfconsreduced1} for $r_{1}$,
\begin{align}
r_{1} = V\Big(r_{1}\Big(K+L\frac{r_{2}}{r_{1}}\Big)\Big),
\end{align}
and recall that we must have $L<0$ for non-symmetric solutions to exist. Since $\frac{r_{2}}{r_{1}}<1$, we know that $K+L\frac{r_{2}}{r_{1}}>K+L$ and, due to the fact that $x\mapsto V(x)$ is strictly increasing, also $r<r_{1}$. Note that we are not quantifying the difference $r_{1}-r_{2}$. The strict inequality follows purely from the fact that $\frac{r_{2}}{r_{1}}<1$, making it impossible to match the solutions for $r$ and $r_{1}$. Similarly, we can show that $r_{2}<r$.
\end{proof}

\subsection{Properties of the bifurcation line}
\label{sec:properties}
We cannot solve \eqref{eq:proofcritselfcon} analytically for $K^*$. We can, however, plot \eqref{eq:proofcritselfcon} numerically, which refines the phase diagram in Fig.~\ref{fig:phasediagrampsi} for $\psi = 0$, as shown in Fig.~\ref{fig:asymphasediagram}. In this section we first list some basic properties of $r^{*}(K)$ and its derivatives, defined as the solution of \eqref{eq:proofcritselfcon} when we eliminate $L$ with the help of \eqref{eq:Lsolved}. After that we state a theorem on the asymptotic properties of the bifurcation line $L^{*}(K)$ defined implicitly by \eqref{eq:criticalselfcon}.  

\begin{figure}
\centering
\includegraphics[scale=1.7]{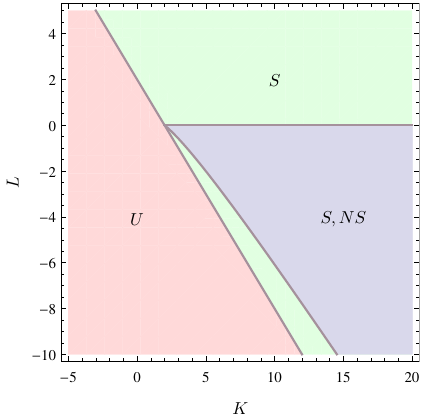}
\includegraphics[scale=1.7]{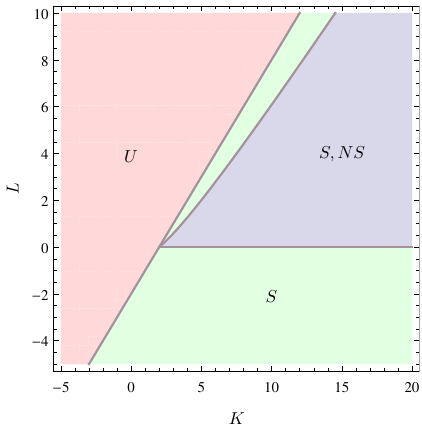}
\caption{In the light red region there is one pair of solutions: unsynchronized. In the light green region there are two pairs of solutions: unsynchronized and symmetric synchronized. In the light blue region there are three pairs of solutions: unsynchronized, symmetric synchronized and non-symmetric synchronized.}\label{fig:asymphasediagram}
\end{figure} 

\begin{theorem}[\textbf{Properties of $K\mapsto r^{*}(K)$}]\ \\
\label{lemma:rprop}
\vspace{-0.5cm}
\begin{enumerate}
\item $\lim_{K\downarrow 2}r^{*}(K) = 0$.
\item $\lim_{K\rightarrow\infty}r^{*}(K) = 1$.
\item $r^{*}(K)\sim \sqrt{\frac{K-2}{2}}\quad \text{as }K\downarrow 2$.
\item $1-r^{*}(K)\sim \frac{1}{2\sqrt{K}}\quad \text{as }K\rightarrow \infty$.
\item $\frac{\partial r^{*}(K)}{\partial K} > 0\quad \text{for all } K > 2$.
\item $\frac{\partial^{2} r^{*}(K)}{\partial K^{2}} < 0\quad \text{for all } K > 2$.
\end{enumerate}
\end{theorem}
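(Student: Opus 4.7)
The plan is to recast the bifurcation curve in parametric form. Setting $u := V^{-1}(r)$ in \eqref{eq:proofcritselfcon} and \eqref{eq:proofcritselfVprime} immediately gives $K+L = u/V(u)$ and $K-L = 1/V'(u)$, hence the explicit parametric representation
\begin{equation*}
K(r) = \frac{1}{2}\!\left[\frac{u}{V(u)} + \frac{1}{V'(u)}\right], \qquad L(r) = \frac{1}{2}\!\left[\frac{u}{V(u)} - \frac{1}{V'(u)}\right].
\end{equation*}
By the uniqueness argument in the preceding theorem, $r \mapsto K(r)$ is a bijection from $(0,1)$ onto $(2,\infty)$ and $r^{*}(K)$ is its inverse, so I would prove all six statements working from these formulas, supplemented by the identity $V'(u) = 1 - V(u)/u - V(u)^{2}$ derived in \eqref{eq:giacominvprime}.

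For item 5, differentiating using $du/dr = 1/V'(u)$ gives
\begin{equation*}
\frac{dK}{dr} = \frac{1}{2 V'(u)}\!\left[\frac{V(u) - u V'(u)}{V(u)^{2}} + \frac{-V''(u)}{V'(u)^{2}}\right].
\end{equation*}
Strict concavity of $V$ together with $V(0)=0$ (Properties 1 and 4 of Proposition \ref{prop:vproperties}) gives $V(u) - u V'(u) > 0$ for $u > 0$ (the tangent to $V$ at $u$ lies strictly above $V(0)$), and combined with $V''(u) < 0$ and $V(u),V'(u) > 0$ both bracketed terms are strictly positive. Hence $dK/dr > 0$, which yields $\partial r^{*}/\partial K > 0$ and also confirms the uniqueness of $r^{*}(K)$ left open at the end of the characterization theorem.

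For items 1--4 I would expand $V$ at the two endpoints. The small-$u$ expansion $V(u) = u/2 - u^{3}/16 + O(u^{5})$ and $V'(u) = 1/2 - 3u^{2}/16 + O(u^{4})$ (from $V = I_{1}/I_{0}$) yields $K(r) = 2 + u^{2}/2 + O(u^{4})$; combined with $r = V(u) \sim u/2$ this gives $K - 2 \sim 2 r^{2}$, proving item 3 and, together with item 5 and continuity, item 1. For large $u$, the expansion $V(u) = 1 - 1/(2u) - 1/(8u^{2}) + O(u^{-3})$ and the identity give $V'(u) \sim 1/(2u^{2})$, hence $K(r) \sim u^{2}$ and $1 - r \sim 1/(2u)$, yielding $1 - r^{*}(K) \sim 1/(2\sqrt{K})$ (item 4), and monotonicity then gives item 2.

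Item 6 is equivalent to convexity of $K(r)$, i.e., $d^{2}K/dr^{2} > 0$. Differentiating the expression for $dK/dr$ once more produces terms involving $V''$ and $V'''$; I would attempt to reduce these via the identity $V' = 1 - V/u - V^{2}$ and its derivative, hoping for a manifestly positive combination. The main obstacle is that this does not appear to collapse cleanly using only the four listed properties of $V$, so following the paper's stated use of numerically assisted proofs in Section \ref{sec:nonsym}, I would verify $d^{2}K/dr^{2} > 0$ numerically as a function of $u \in (0,\infty)$ using the Bessel-function representation of $V$.
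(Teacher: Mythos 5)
Your proposal is correct, and it is a genuinely different and in one respect stronger route than the paper's. The paper works implicitly: it treats $K\mapsto r^{*}(K)$ via the function $h(K,r^{*})$ obtained from \eqref{eq:frselfcons}, appeals to the implicit function theorem (requiring the nontrivial bound \eqref{eq:IFTrbound}, itself established with Bessel-ratio bounds from Segura plus numerics), and proves property 5 by computing $\partial_{K}r^{*}$ as a fraction, arguing the numerator is positive via \cite[Eq.~(2.4)]{G10} and the denominator nonzero, and then fixing the overall sign by plugging in a numerically computed pair $(K,r^{*}(K))$. You instead introduce $u=V^{-1}(r)$ and read off the explicit parametrization $K+L=u/V(u)$, $K-L=1/V'(u)$ directly from \eqref{eq:proofcritselfcon}--\eqref{eq:proofcritselfVprime}. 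This buys you a fully analytic proof of property 5: $dK/dr>0$ follows at once from $V(u)-uV'(u)>0$ (concavity through the origin) and $-V''(u)>0$, with no bound-chasing and no numerical sign check; it also establishes bijectivity of $K(r)$, closing the uniqueness gap the paper explicitly defers. Your treatment of properties 1--4 via endpoint expansions of $V$ and the identity $V'(u)=1-V(u)/u-V(u)^{2}$ is the same in spirit as the paper's (the paper also Taylor-expands $V$ at $0$ and $\infty$), just organized through the parametrization, which avoids the paper's route through a cubic with complex-looking roots in \eqref{eq:asymcomplex}. For property 6 both approaches ultimately resort to numerics; note that the equivalence you invoke between $\partial^{2}r^{*}/\partial K^{2}<0$ and $d^{2}K/dr^{2}>0$ requires $dK/dr>0$, which you do have from property 5, so the reduction is sound.
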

\begin{proof}
We use \eqref{eq:Kcondition} to get 
\begin{align}
0\leq r^{*}(K) < \sqrt{\frac{K-2}{K}},
\end{align}
from which property 1 follows by taking the limit $K\downarrow 2$. The inequality in \eqref{eq:Kcondition} also implies
\begin{align}
\lim_{r\uparrow 1}K^{*}(r) = \infty.
\end{align}
If both $K\mapsto r^{*}(K)$ and $r\mapsto K^{*}(r)$ are continuous, then property 2 follows. To show that $K\mapsto r^{*}(K)$ is continuous, we apply the implicit function theorem (IFT) to calculate the derivative  
\begin{equation}
h(K, r^{*}) = V\Bigg(\Big(K-\sqrt{K^{2}-\frac{2K}{1-(r^{*})^{2}}}\Big)r^{*}\Bigg) -r^{*},
\end{equation}
which we find by rewriting \eqref{eq:frselfcons}. From the conditions for the IFT \cite{KP02}, we have that, in order for $K\mapsto r^{*}(K)$ to be continuous, we need 
that
\begin{equation}
2K [(r^{*})^{2}-1 + K(1-(r^{*})^{2})^{2}]\neq 0,
\end{equation}
which we obtain by differentiating $h(K, r)$ with respect to $r$ and setting the derivative to zero. From this we obtain the following bound on $r^{*}(K)$:
\begin{equation}
\label{eq:IFTrbound}
r^{*}(K)>\sqrt{1-\frac{1-\sqrt{1+4K}}{2K}} = r_{-}^{*}.
\end{equation}
In order to rigorously show that this bound is satisfied, we can use the sequence of (iteratively defined) upper bounds $x u_{1}^{(k)}(x), k\in \mathbb{N}_{0}$, for $V(x)$ given in \cite[Theorem 4]{S11}, which converge to $V(x)$ as $k\rightarrow\infty$. Here we will use 
\begin{equation}
l_{\nu}^{(1)} = \Bigg(\nu-\frac12 + \sqrt{\Big(\nu+\frac12\Big)^{2}+x^{2}}\Bigg)^{-1},
\end{equation}
as suggested in\cite[Equation (22)]{S11}. If substitution of the right-hand side of \eqref{eq:IFTrbound} for $r$ into $f_{r}(K)r u_{1}^{(k)}(f_{r}(K)r)-r$ makes it less than 0, then we know that the bound in \eqref{eq:IFTrbound} is satisfied. To see why, note that then
\begin{equation}
f_{r_{-}^{*}}(K)r_{-}^{*} u_{1}^{(k)}(f_{r_{-}^{*}}(K)r_{-}^{*})-r_{-}^{*} > h(K, r_{-}^{*}).
\end{equation} 
Now, if $f_{r_{-}^{*}}(K)r_{-}^{*}u_{1}^{(k)}(f_{r_{-}^{*}}(K)r_{-}^{*})-r_{-}^{*}<0$ for all $K$, then so is $h(K, r_{-}^{*})$, so that $r_{-}^{*}$ does not satisfy $h(K, r^{*})=0$ and the solution satisfies $r^{*}> r_{-}^{*}$. Using $xu_{1}^{(k)}(x)$ with $k=2$, 
\begin{equation}
xu_{1}^{(2)}(x) = \frac{x}{2+\frac{x^{2}}{\frac{3}{2}+\sqrt{(\frac{5}{2})^{2}+x^{2}}}},
\end{equation}
as an upper bound, we get that the bound in \eqref{eq:IFTrbound} is at least satisfied for $K\in (2, K_{k=2})$, where $K_{k=2}=15.8684$. By increasing $k$, we see that the upper bound of this interval increases and we expect that in the limit as $k\rightarrow \infty$, \eqref{eq:IFTrbound} is satisfied on $K\in (2,\infty)$. Numerically, we indeed see that this bound is satisfied, as shown in Fig.~\ref{fig:kasr} (this figure shows that $K^{*}(r) < \frac{2-r^{2}}{(1-r^{2})}$, which is the same as \eqref{eq:IFTrbound}). For the continuity of $r\mapsto K^{*}(r)$ we require, again by the condition for the IFT, that
\begin{equation}
\partial_{K^{*}}h(K^{*}, r) \neq 0,
\end{equation}
which is satisfied by all $K^{*}>0$ and $r^{*}\in (0, 1)$, so that property 2 is proved.

We know that $\lim_{K\downarrow 2} r(K)=0$ (by property 1), so that we can expand $V$ around 0 in the self-consistency equation \eqref{eq:frselfcons}. This leads to 
\begin{align}
\lim_{K\downarrow 2}f_{r^{*}(K)}(K) = 2.
\end{align}
The corresponding asymptotic equation can be solved for $r^{*}(K)$ to obtain property 3. Property 4 follows from a similar calculation, by using the expansion of $V$ around infinity, and gives 
\begin{align}
1-r^{*}(K) \sim \frac{1}{2f_{r^{*}(K)}(K)r^{*}(K)}.
\end{align}
This equation gives rise to a cubic polynomial in $r^{*}(K)$, which can be solved and gives 
\begin{align}
\label{eq:asymcomplex}
1-r^{*}(K)\sim \frac{1}{3} - \frac{(1-\mathrm{i}\sqrt{3})K}{3B} - \frac{(1+\mathrm{i}\sqrt{3}) B}{12 K},
\end{align}
where
\begin{align}
B = \Big(8K^{3} - 27K^{2} +3\sqrt{3}\sqrt{27K^{4}-16K^{5}}\Big)^{1/3}.
\end{align}
The complex parts in the right-hand side of \eqref{eq:asymcomplex} compensate one another, making it real. Taking only the leading order terms in $K$, we obtain the asymptotics in property 4. We can calculate $\partial_{K}r^{*}(K)$ by differentiating  \eqref{eq:frselfcons}, i.e.,
\begin{align}
\label{eq:rderivative}
\frac{\partial r^{*}(K)}{\partial K} = \frac{cr^{*}\Big(\sqrt{K^{2}-\frac{2K}{1-{r^{*}}^{2}}}- K -\frac{1}{(1-{r^{*}}^{2})}\Big)}{\sqrt{K^{2}-\frac{2K}{1-{r^{*}}^{2}}} - \frac{2cK{r^{*}}^{2}}{(1-{r^{*}}^{2})^{2}} + c\sqrt{K^{2}-\frac{2K}{1-{r^{*}}^{2}}}\Big(\sqrt{K^{2}-\frac{2K}{1-{r^{*}}^{2}}}-K\Big)},
\end{align}
where in the right-hand side we have abbreviated $r^{*}=r^{*}(K)$, and
\begin{align}
c = V'(f_{r^{*}}(K)r^{*}). 
\end{align}
It follows from \eqref{eq:proofcritselfVprime} that
\begin{align}
c = \frac{1}{K-L}=\frac{1}{K+\sqrt{K^{2}-\frac{2K}{1-{r^{*}}^{2}}}},
\end{align}
which simplifies \eqref{eq:rderivative} to
\begin{align}
\label{eq:rderivativesimple}
\frac{\partial r^{*}(K)}{\partial K} = \frac{r^{*}(1-{r^{*}}^{2})\Big\{\Big(K-\sqrt{K^{2}-\frac{2K}{1-{r^{*}}^{2}}}\Big)(1-{r^{*}}^{2})-1\Big\}}{2K\{2-{r^{*}}^{2}-K(1-{r^{*}}^{2})^{2}\}}.
\end{align}
Due to the inequality in \cite[Equation~(2.4)]{G10}, we have that
\begin{align}
\frac{1}{f_{r^{*}}(r^{*}(K))} < 1-r^{*}(K)^{2} < \frac{2}{f_{r^{*}}(r^{*}(K))},
\end{align}  
which makes the numerator positive. The denominator becomes zero when 
\begin{align}
K = \frac{2-{r^{*}}^{2}}{(1-{r^{*}}^{2})^{2}}. 
\end{align}
Rewriting the lower bound for $r^{*}$ in \eqref{eq:IFTrbound}, we get 
\begin{equation}
\label{eq:Kupperbound}
K < \frac{2-{r^{*}}^{2}}{(1-{r^{*}}^{2})^{2}},
\end{equation} 
which ensures that the denominator of \eqref{eq:rderivative} is never zero. For values of $K$ satisfying \eqref{eq:Kupperbound} the derivative is positive. This we find by substituting a pair of values $r^{*}(K), K$, calculated numerically, into \eqref{eq:rderivative}, and proves property 5 because the derivative does not change sign in the range of $K$. To prove property 6, we take the derivative with respect to $K$ of \eqref{eq:rderivativesimple} and substitute the expression for the first derivative. This leads to a lengthy equation with denominator
\begin{align}
4K^{2}\sqrt{K^{2}-\frac{2K}{1-{r^{*}}^{2}}}\;\{2-{r^{*}}^{2}-K(1-{r^{*}}^{2})^{2}\},
\end{align}
which is positive by the same argument as for the first derivative. Setting the numerator to zero and solving for $K$, we find that there are no solutions when $r$ is between zero and the appropriate root of a 9th order polynomial in $r$, which numerically is 0.946819. Between this value and 1 there are two solutions, for which the numerator is zero, given by the solutions to the two roots of a quartic polynomial in $K$. We can plot these solutions together with the upper and lower bounds for $K^{*}(r)$ and compare them with the true $K^{*}(r)$, calculated numerically, as shown in the right panel of Fig.~\ref{fig:kasr}.
\begin{figure}[H]
\centering
\includegraphics[scale=1.7]{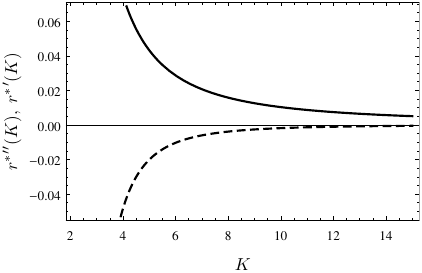}
\includegraphics[scale=1.7]{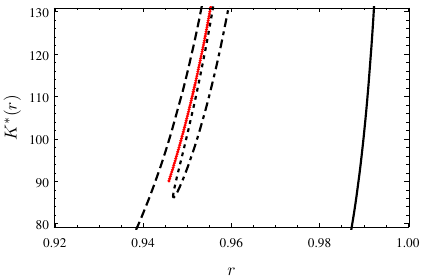}
\caption{Left: Interpolation of the first (solid) and second (dashed) derivatives of $r^{*}(K)$. Right: Comparison of the numerical solution for the bifurcation point $K^{*}(r)$ (red, dotted) with the upper bound $\frac{2-r^{2}}{(1-r^{2})^{2}}$ (long dashed) and the lower bound $\frac{2}{1-r^{2}}$ (solid), and with the solutions to the numerator of the second derivative being zero (short dashed and dash-dotted).}
\label{fig:kasr}
\end{figure}

The right panel of Fig.~\ref{fig:kasr} suggests that the second derivative also does not change sign.  Numerically solving for a pair $(K, r^{*}(K))$, and substituting this into the numerator, we see that the second derivative is negative. This is confirmed by the left panel of Fig.~\ref{fig:kasr}.
\end{proof}
\noindent To confirm the asymptotic solutions for $r^{*}(K)$ in properties 3 and 4, we plot them and compare them to the numerical solutions in Fig.~\ref{fig:rprop}. 

The next theorem gives the asymptotics of  $L^{*}(K)$ implicitly defined by \eqref{eq:criticalselfcon} in the limit as $K\rightarrow \infty$ and close to $(K, L)=(2, 0)$.

\begin{figure}[H]
\centering
\includegraphics[scale=1.7]{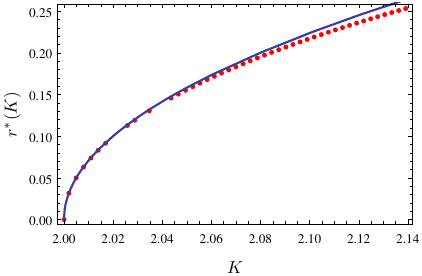}
\includegraphics[scale=1.7]{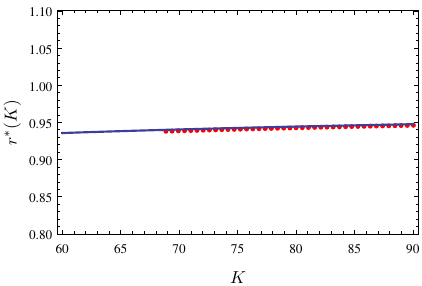}
\caption{Comparison of the numerical solution for the bifurcation point $r^{*}(K)$ with the asymptotic expressions for $r^{*}(K)$ given in properties 4 and 5 of Theorem \ref{lemma:rprop}, for $K$ close to 2 on the left and for $K$ large on the right.}
\label{fig:rprop}
\end{figure}

\begin{theorem}[\textbf{Asymptotic properties of the bifurcation line}]\ 
\label{thm:bifurcprop}
The derivative of $L^{*}(K)$, defined implicitly by \eqref{eq:criticalselfcon}, has the following properties:
\begin{enumerate}
\item $\lim_{K\rightarrow\infty}\frac{\partial L^{*}(K)}{\partial K} = -1$.
\item $\lim_{K\downarrow 2}\frac{\partial L^{*}(K)}{\partial K} = -\frac{1}{2}$.
\end{enumerate}
\end{theorem}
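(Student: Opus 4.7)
The plan is to derive a clean implicit formula for $L^{*\prime}(K)$ along the bifurcation curve from the tangency conditions \eqref{eq:proofcritselfcon}--\eqref{eq:proofcritselfVprime}, and then read off the two limits using the asymptotics of $r^{*}(K)$ already established in Theorem~\ref{lemma:rprop}. Differentiating $r^{*}(K)=V\bigl((K+L^{*}(K))r^{*}(K)\bigr)$ in $K$ and using $V'((K+L^{*})r^{*})=1/(K-L^{*})$ from \eqref{eq:proofcritselfVprime} yields
\begin{align}
(K-L^{*})\,r^{*\prime}(K)=\bigl(1+L^{*\prime}(K)\bigr)r^{*}(K)+(K+L^{*})\,r^{*\prime}(K),
\end{align}
which collapses to the key formula
\begin{align}
\label{eq:planLstar}
L^{*\prime}(K)=-1-\frac{2L^{*}(K)\,r^{*\prime}(K)}{r^{*}(K)}=-1-\frac{L^{*}(K)\,(r^{*2})'(K)}{r^{*2}(K)}.
\end{align}
Both claims thus reduce to computing the asymptotics of $L^{*}(r^{*2})'/r^{*2}$ at the two endpoints of the bifurcation curve; the second form of \eqref{eq:planLstar} is the convenient one near $K=2$, since $r^{*2}$ (not $r^{*}$) is the smooth quantity at that degenerate endpoint.

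For Property~1, use Property~4 of Theorem~\ref{lemma:rprop}: $1-r^{*}(K)\sim 1/(2\sqrt K)$, hence $1-r^{*2}(K)\sim 1/\sqrt K$ and, by differentiation (justified by the smoothness of $r^{*}$ on $(2,\infty)$ established in the proof of Theorem~\ref{lemma:rprop}), $(r^{*2})'(K)\sim 1/(2K^{3/2})$. Substituting $1-r^{*2}\sim K^{-1/2}$ into \eqref{eq:Lsolved} gives
\begin{align}
|L^{*}(K)|=\sqrt{K^{2}-\tfrac{2K}{1-r^{*2}}}\sim\sqrt{K^{2}-2K^{3/2}}=K\sqrt{1-2K^{-1/2}}\sim K-\sqrt K,
\end{align}
so $L^{*}(r^{*2})'/r^{*2}\sim -(K-\sqrt K)/(2K^{3/2})\to 0$, and \eqref{eq:planLstar} yields $L^{*\prime}(K)\to -1$.

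For Property~2, Property~3 of Theorem~\ref{lemma:rprop} gives only $r^{*2}(K)=(K-2)/2+o(K-2)$; on its own this produces a cancellation in \eqref{eq:Lsolved} and is insufficient to pin down $L^{*}$ or $(r^{*2})'$. To go one order deeper, parameterize the bifurcation curve by $r$ as in the proof of Theorem~\ref{lemma:rprop}: from \eqref{eq:frselfcons} one has $f_{r}(K^{*}(r))=V^{-1}(r)/r=2+r^{2}+O(r^{4})$ (using $V(x)=x/2-x^{3}/16+O(x^{5})$), and inverting $f_{r}$ gives $K^{*}(r)=2+2r^{2}+O(r^{4})$; in particular $r^{*2}$ has a smooth one-sided extension at $K=2^{+}$ with $(r^{*2})'(2^{+})=1/2$. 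Similarly, Taylor-expanding the self-consistency $r^{*}=V((K-|L^{*}|)r^{*})$ and dividing by $r^{*}$ gives $K-|L^{*}|=2+r^{*2}+O(r^{*4})$, whence
\begin{align}
|L^{*}(K)|=K-2-r^{*2}(K)+O(r^{*4}(K))=\tfrac{K-2}{2}+O\bigl((K-2)^{2}\bigr).
\end{align}
Combining, $L^{*}(r^{*2})'/r^{*2}\sim(-(K-2)/2)\cdot(1/2)/((K-2)/2)\to -1/2$, so \eqref{eq:planLstar} delivers $L^{*\prime}(K)\to -1/2$. The main obstacle is exactly this cancellation at $K=2$: one must extract $r^{*2}$ and $|L^{*}|$ to second order in $K-2$, which requires the cubic term $-x^{3}/16$ of the expansion of $V$, together with the smoothness argument for $r^{*2}$ obtained by inverting the power series of $K^{*}(r)$ in $r^{2}$.
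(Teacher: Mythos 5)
Your proof is correct and takes a genuinely different (and cleaner) route than the paper's. The paper works with a two-variable function $g(K,L)=r^{*}(K,L)-V((K+L)r^{*}(K,L))$ built from the explicit formula \eqref{eq:criticalr}, computes $\partial_{K}g$ and $\partial_{L}g$ in full (involving the Bessel quotient $S$), substitutes Ans\"{a}tze $L^{*}(K)=-aK+c$ and $L^{*}(K)=(K-2)b+o(1)$, and evaluates the resulting limits with MATHEMATICA. Your approach instead differentiates the self-consistency equation \eqref{eq:proofcritselfcon} directly along the bifurcation curve and, crucially, uses the tangency identity \eqref{eq:proofcritselfVprime} to cancel the $V'$ factor, collapsing everything to the scalar formula
$L^{*\prime}(K)=-1-2L^{*}(K)\,r^{*\prime}(K)/r^{*}(K)$.
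This reduces both limits to asymptotics of $r^{*}$ already established in Theorem~\ref{lemma:rprop} (plus a short second-order expansion near $K=2$), buys a transparent structural explanation for the value $-1$ (the correction term vanishes), and sidesteps the symbolic-algebra heavy lifting entirely. The second-order expansion $K^{*}(r)=2+2r^{2}+O(r^{4})$ and $|L^{*}|=r^{2}+O(r^{4})$ checks out against \eqref{eq:frselfcons} and \eqref{eq:Lsolved}, and you correctly identify that the first-order information in Property~3 of Theorem~\ref{lemma:rprop} is insufficient because of the cancellation inside the square root in \eqref{eq:Lsolved}.

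One small point to tighten: in the Property~1 step you pass from $1-r^{*2}(K)\sim K^{-1/2}$ to $(r^{*2})'(K)\sim\tfrac12 K^{-3/2}$ ``by differentiation, justified by smoothness.'' Smoothness alone does not license differentiating an asymptotic equivalence; you need some monotonicity. This is available here: Property~6 of Theorem~\ref{lemma:rprop} says $r^{*\prime}$ is decreasing, and combined with $\int_{K}^{\infty}r^{*\prime}=1-r^{*}(K)\to 0$ the standard integral-test argument ($K r^{*\prime}(2K)\le\int_{K}^{2K}r^{*\prime}$) gives $K r^{*\prime}(K)\to 0$, which is all you actually use, since it already forces $L^{*}(r^{*2})'/r^{*2}=O(K\cdot r^{*\prime})\to 0$. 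Citing concavity rather than smoothness here closes the gap and makes the argument fully rigorous, without changing anything else.
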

\begin{proof}
We begin by proving the existence of the limits, for which we need the following lemma.

\begin{lemma}[\textbf{Derivatives of $K\mapsto L^{*}(K)$}]\ 
\label{lemma:Lderivatives}
For all $K>2$,
\begin{enumerate}
\item $\frac{\partial L^{*}(K)}{\partial K} <0$.
\item $\frac{\partial^{2} L^{*}(K)}{\partial K^{2}} <0$.
\end{enumerate}
\end{lemma}

\begin{proof}
In order for $L^{*}(K)$ to be continuous by the IFT (in a similar way as in the proof of Theorem \ref{lemma:rprop}), we require that
\begin{align}
\label{eq:IFTlbound}
 L^{*}(K)> -\sqrt{K + K^{2} - \sqrt{K^{2}(1+4K)}}. 
\end{align}
We will see, numerically, that this bound is satisfied because it lies below another lower bound of $L^{*}(K)$. To rigorously show that this bound is satisfied, we expect that it is possible to use the same procedure as outlined for the bound on $r^{*}(K)$ in \eqref{eq:IFTrbound}. Now we start by differentiating \eqref{eq:criticalselfcon} with respect to $K$ and solving for $\partial_{K} L^{*}(K)$. This leads to
\begin{align}
\label{eq:Lfirstderivative}
\frac{\partial L^{*}(K)}{\partial K} = -\frac{(K-2)K^{3}+2K^{2}L^{*}(K)-2(K-1)KL^{*}(K)^{2}+2L^{*}(K)^{3}+L^{*}(K)^{4}}{(K-2)K^{3}-2(K-1)KL^{*}(K)^{2}+L^{*}(K)^{4}}.
\end{align}
Setting the numerator, which is a quartic polynomial in $L^{*}(K)$, equal to zero and solving for $L^{*}(K)$, we find one solution that lies above the critical condition for $L$ when fixing $K$, $-K+2$. The expression is too lengthy to present here and does not lead to any useful insight. Taking the derivative with respect to $K$ of \eqref{eq:Lfirstderivative}, substituting the expression for the first derivative \eqref{eq:Lfirstderivative} and setting the resulting numerator to zero, we are left with solving a 7th order polynomial for $L^{*}(K)$. Again the expression is lengthy and does not lead to any insight. Only one of the solutions to the 7th order polynomial lies above the critical line. Comparing these two solutions, one coming from the quartic polynomial and the other from the 7th order polynomial, we see numerically that the first is a lower bound for $L^{*}(K)$ and the second is an upper bound for $L^{*}(K)$, as seen in the right panel of Fig.~\ref{fig:lask}. This lower bound is an upper bound for the right-hand side of \eqref{eq:IFTlbound}, so that the conditions for the IFT are satisfied. The expression determining when the denominator of both the first and the second derivative is zero, obtained by setting their respective denominators to zero (which makes the derivatives diverge), is the same, and the only solution falling above the critical condition is upper bounded by the lower bound for $L^{*}(K)$ found above (as the solution to the quartic polynomial), so that the derivatives do not diverge. 

\begin{figure}[H]
\centering
\includegraphics[scale=1.7]{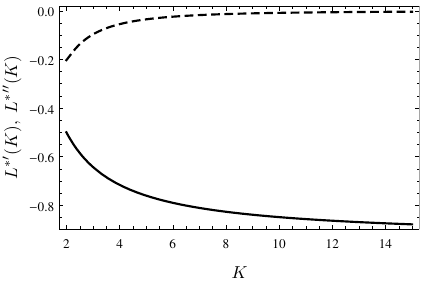}
\includegraphics[scale=1.7]{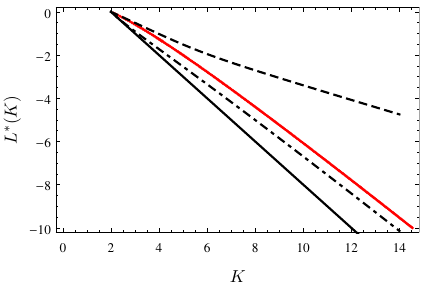}
\caption{Left: Interpolation of the first (solid) and second (dashed) derivatives of $L^{*}(K)$. Right: Comparison of the numerical solution for the bifurcation point $L^{*}(K)$ (red, dotted) with the upper bound/solution to the 7th order polynomial (dashed) and the lower bound/solution to the quartic polynomial (dot-dashed), as well as the critical condition for $L$ when fixing $K$, $-K+2$ (solid).}
\label{fig:lask}
\end{figure}

The right panel of Fig.~\ref{fig:lask} suggests that both the first derivative and the second derivative of $L^{*}(K)$ do not change sign as a function of $K$. Substituting a pair of values $K, L^{*}(K)$, solved for numerically, we confirm the statements in Lemma~\ref{lemma:Lderivatives}. This is also corroborated by the left panel of Fig.~\ref{fig:lask}.
\end{proof}

\begin{remark}
For the mathematical reader the numerical assistance in the argument above might not be satisfying. We suspect that the proof can be made rigorous by using the sequences of upper and lower bounds in \cite[Theorem 4]{S11} on $V$ in \eqref{eq:criticalselfcon}, in order to get upper and lower bounds for $L^{*}(K)$ that give a tighter wedge than the one in the right panel of Fig.~\ref{fig:lask}.  
\end{remark}

Due to Lemma \ref{lemma:Lderivatives} and the fact that $L^{*}(K)$ is bounded below by $-K+2$, we have that the limits exists.

We now turn to the proof of Theorem~\ref{thm:bifurcprop}. Abbreviate
\begin{align}
g(K, L) =  r^{*}(K, L) - V\big((K+L)r^{*}(K, L)\big).
\end{align}
By the implicit function theorem, we have 
\begin{align}
\frac{\partial L^{*}(K)}{\partial K} = - \frac{\partial_{K}g(K, L)}{\partial_{L}g(K, L)}.
\end{align}
Compute
\begin{align}
\label{eq:numerator}
\partial_{K}g(K, L) =& \;\frac{\frac{4K^{2}}{(K^{2}-L^{2})^{2}} - \frac{2}{K^{2}-L^{2}}}{2r^{*}(K, L)}\nonumber\\
&+ \Big((K + L)\frac{\frac{4K^{2}}{(K^{2}-L^{2})^{2}} - \frac{2}{K^{2}-L^{2}}}{2r^{*}(K, L)} +r^{*}(K, L) \,\Big)\\
&\times \Big(V^{2}\Big((K+L)r^{*}(K, L)\,\Big) - \tfrac12 - \tfrac12 S\Big((K+L)r^{*}(K, L)\,\Big)\Big)\nonumber
\end{align}
and 
\begin{align}
\label{eq:denominator}
\partial_{L}g(K, L) =& -\frac{2KL}{(K^{2}-L^{2})^{2}r^{*}(K, L)}\nonumber\\
&+ \Big(-(K+L)\frac{2K^{*}L}{(K^{2}-L^{2})^{2}r^{*}(K, L)} + r^{*}(K, L)\,\Big)\\
&\times \Big(V^{2}\Big((K+L)r^{*}(K, L)\,\Big) - \tfrac12 - \tfrac12 S\Big((K+L)r^{*}(K, L)\,\Big)\Big),\nonumber
\end{align}
where $S(x)= \frac{\text{I}_{2}(x)}{\text{I}_{0}(x)}$.

For property 1, we make the Ansatz $ L^{*}(K) = -aK + c, K\rightarrow\infty$ where $c=c(K)=o(K)$ (which is confirmed in Fig.~\ref{fig:asymphasediagram}). Taking the limit $K\to\infty$, we get zero for the first terms in the right-hand sides of \eqref{eq:numerator}--\eqref{eq:denominator}, i.e.,
\begin{align}
&\lim_{K\to\infty} \frac{\frac{4(K)^{2}}{(K^{2}-L^{2})^{2}} - \frac{2}{K^{2}-L^{2}}}{2\sqrt{1-\frac{2K}{K^{2}-L^{2}}}} = 0,\\
&\lim_{K\to\infty}\frac{2KL}{(K^{2}-L^{2})^{2}\sqrt{1-\frac{2K}{K^{2}-L^{2}}}} = 0,
\end{align}
where we have used the expression for $r^{*}(K, L)$ from \eqref{eq:criticalr}. The multiplication factors in the last line of the right-hand sides of \eqref{eq:numerator}--\eqref{eq:denominator} are the same, so we are left with calculating the limit as $K\rightarrow \infty$ of the quotient
\begin{align}
-\frac{\Bigg((K + (-aK + c))\frac{\frac{4K^{2}}{(K^{2}-(-aK + c)^{2})^{2}} - \frac{2}{K^{2}-(-aK + c)^{2}}}{2r^{*}(K, -aK + c)} + r^{*}(K, -aK+c) \Bigg)}{\Bigg(-(K+(-aK + c))\frac{2K(-aK + c)}{(K^{2}-(-aK + c)^{2})^{2}r^{*}(K, -aK+c)} + r^{*}(K, -aK+c)\Bigg)},\quad c=o(K).
\end{align}
A straightforward but tedious calculation (with the help of MATHEMATICA) shows that this limit is $-1$.

For property 2, we must find the limit of $- \frac{\partial_{K}g(K, L)}{\partial_{L}g(K, L)}$ as we approach the point $(K, L)=(2, 0)$ along the line $L^{*}(K)$. We make the Ansatz $L^{*}(K) =(K-2)b +o(1)$, $K\downarrow 2$. Making this replacement in the expression for the derivative and doing a Taylor expansion around $K=2$, we obtain after a tedious calculation (with the help of MATHEMATICA),
\begin{align}
\label{eq:asymnumerator}
\lim_{K\downarrow 2}\partial_{K}g(K, L)|_{L = (K-2)b} = -\sqrt{K-2}\Big(\frac{3}{8\sqrt{2}} + \frac{b}{4\sqrt{2}}\Big)
\end{align}
for the terms in the numerator and
\begin{align}
\label{eq:asymdenominator}
\lim_{K\downarrow 2}\partial_{L}g(K, L)|_{L = (K-2)b} = -\frac{\sqrt{K-2}}{2\sqrt{2}}
\end{align}
for the terms in the denominator.
Combining \eqref{eq:asymnumerator}--\eqref{eq:asymdenominator} we obtain 
\begin{align}
b = -\frac{1}{4}(3+2b),
\end{align}
so that $b=-\frac{1}{2}$.
\end{proof}
Properties 1 and 2 are confirmed by the left panel of Fig~\ref{fig:lask}. It seems that $K\mapsto L^{*}(K)$ for large $K$ does not have an asymptote, since when we take the limit after the replacement $L^{*}(K) = -K + c$ we get an equation for the bifurcation point that reads
\begin{align}
\label{eq:asympc}
\sqrt{1 - \frac{1}{c}} = V\Bigg(c\sqrt{1 - \frac{1}{c}}\;\Bigg).
\end{align}
The only solution to this equation is $c=1$, which is not possible because it would place the asymptote below the critical line. This suggests that $c=c(K)$ grows as a function of $K$, but that this growth is sublinear. 

\section{Simulation}
\label{sec:simulation}
Fixing the phase difference is not physical, since the system will relax into a steady state and will choose the angles that are the least costly energetically. Studying the dynamics of the transitions between states or the stability properties of the possible states are both difficult tasks. However, we expect that the non-symmetric state is either unstable or metastable and using simulations we can observe what type of transitions one might expect between the possible states. To see this, we take the initial distribution for both populations to have mean $\pi$, but choose the second community to have a slightly larger variance initially, meaning that the synchronization level starts lower. The outcome of the simulation can be seen in Fig \ref{fig:metastable}. It seems that the community with less synchronization initially is suppressed by the community with more synchronization, until the `push' from the latter becomes too strong. This is reflected in the angles, which stay relatively close together for a while, before moving apart. This type of transition seems only to occur when the parameters are chosen such that the non-symmetric solutions discussed above exist.

We expect that the most stable state is the symmetric solution with the largest synchronization level (i.e., the largest effective interaction strength). For example, if $K=5$ and $L=2$, then the symmetric solution with phase difference $\psi=\pi$ has $r_{1}=r_{2}=0.724\ldots$, while the symmetric solution with phase difference $\psi=0$ has $r_{1}=r_{2}=0.918\ldots$. The first state is unstable/metastable, the second state is stable. The transition from the one to the other is shown in Fig~\ref{fig:stabletrans}. 
\begin{figure}[H]
\includegraphics[scale=1.7]{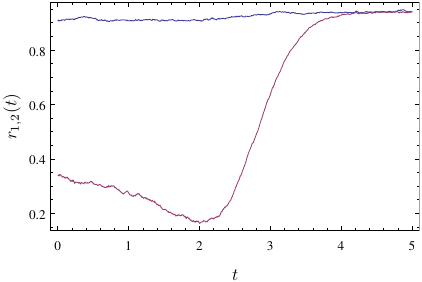}
\includegraphics[scale=1.7]{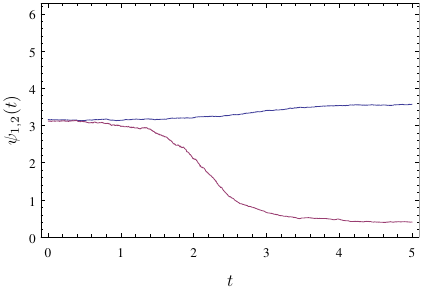}
\caption{Simulation of $1000$ oscillators per community with $K=7$ and $L=-2$. The time step is set at $\ddd t=0.01$. The left image shows the synchronization levels, the right image the phase averages.}
\label{fig:metastable}
\end{figure}
\vspace{-0.8cm}
\begin{figure}[H]
\includegraphics[scale=1.7]{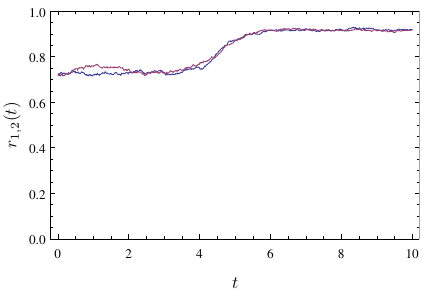}
\includegraphics[scale=1.7]{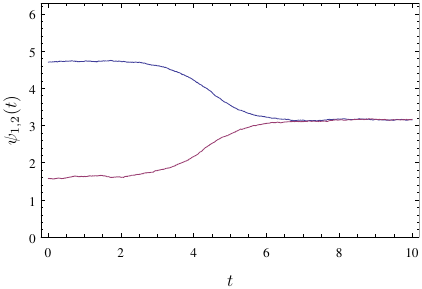}
\caption{Simulation of $1000$ oscillators per community with $K=5$ and $L=2$. The time step is set at $\ddd t=0.01$. The left image shows the synchronization levels, the right images the phase averages.}
\label{fig:stabletrans}
\end{figure}

To be clear, these simulations are not meant to prove any stability properties or transitions, but are useful for determining what types of stability properties or transitions we may expect. They also suggest that much interesting work remains to be done.

\appendix

\section{Concavity of V}
\label{app:concavity}
Recall that
\begin{equation}
\label{eq:psi}
V(x) = \frac{\int_{0}^{2\pi}\eee^{x\cos\theta}\cos \theta\, \ddd \theta}{\int_{0}^{2\pi}\eee^{x\cos\theta}\, \ddd \theta}.
\end{equation}
The first derivative of \eqref{eq:psi} is 
\begin{equation}
\label{eq:psiprime}
\partial_{x}V(x) = \frac{\int_{0}^{2\pi}\eee^{x\cos\theta}\, \ddd \theta\int_{0}^{2\pi}\eee^{x\cos\theta}\cos^{2}\theta\, \ddd \theta - \big(\int_{0}^{2\pi}\eee^{x\cos\theta}\cos\theta\, \ddd \theta\big)^{2}}{(\int_{0}^{2\pi}\eee^{x\cos\theta}\, \ddd \theta)^{2}}.
\end{equation}
We can rewrite
\begin{align}
\int_{0}^{2\pi}\eee^{x\cos\theta}\cos\theta\, \ddd \theta &= \int_{0}^{2\pi}\eee^{\frac{1}{2}x\cos\theta}\eee^{\frac{1}{2}x\cos\theta}\cos\theta\, \ddd \theta\\
&\leq \Big(\int_{0}^{2\pi}\eee^{x\cos\theta}\, \ddd \theta\Big)^{1/2}\Big(\int_{0}^{2\pi}\eee^{x\cos\theta}\cos^{2}\theta\, \ddd \theta\Big)^{1/2},\nonumber
\end{align}
where we have used Holder's inequality in the second line. Taking the square on both sides, we obtain
\begin{equation}
\Big(\int_{0}^{2\pi}\eee^{x\cos\theta}\cos\theta\, \ddd \theta\Big)^{2} \leq \int_{0}^{2\pi}\eee^{x\cos\theta}\, \ddd \theta\int_{0}^{2\pi}\eee^{x\cos\theta}\cos^{2}\theta\, \ddd \theta,
\end{equation}
which proves that \eqref{eq:psiprime} is non-negative. We evaluate \eqref{eq:psiprime} at $x=0$, to get
\begin{equation}
\partial_{x}V(x)|_{x=0} = \frac{2\pi\times\pi}{(2\pi)^{2}} = \frac{1}{2}. 
\end{equation}
For the second derivative we rewrite 
\begin{equation}
\partial_{x}V(x) = \text{I} - \text{II} = \frac{\int f''(x, \theta)\,\ddd \theta}{\int f(x, \theta)\,\ddd \theta} - \frac{(\int f'(x, \theta)\,\ddd \theta)^{2}}{(\int f(x, \theta)\,\ddd \theta)^{2}},
\end{equation}
where $f(x, \theta) = \eee^{x\cos\theta}$ and the prime refers to the derivative with respect to $x$. The integrals are always from $0$ to $2\pi$. Taking the derivative of the first term, we find
\begin{equation}
\text{I}' = \frac{\int f(x, \theta)\,\ddd \theta\int f'''(x, \theta)\,\ddd \theta - \int f''(x, \theta)\,\ddd \theta\int f'(x, \theta)\,\ddd \theta}{(\int f(x, \theta)\,\ddd \theta)^{2}},
\end{equation}
while for the second we find
\begin{equation}
\text{II}' = 2\frac{\int f'(x, \theta)\,\ddd \theta}{\int f(x, \theta)\,\ddd \theta}\times\frac{\int f(x, \theta)\,\ddd \theta\int f''(x, \theta)\,\ddd \theta - (\int f'(x, \theta)\,\ddd \theta)^{2}}{(\int f(x, \theta)\,\ddd \theta)^{2}}.
\end{equation}
Using a common denominator, we can write the difference as 
\begin{align}
\partial_{x}^{2}V(x) &= \frac{1}{\big(\int f(x, \theta)\,\ddd \theta\big)^{3}}\Big[\Big(\int f(x, \theta)\,\ddd \theta\Big)^{2}\int f'''(x, \theta)\,\ddd \theta \nonumber\\
&- 3\int f(x, \theta)\,\ddd \theta\int f'(x, \theta)\,\ddd \theta\int f''(x, \theta)\,\ddd \theta + 2 \Big(\int f'(x, \theta)\,\ddd \theta\Big)^{3} \Big]
\end{align}
To continue, we first let
$$2c =  \int e^{x \cos \theta} \ddd \theta$$
making the desired expression into
\begin{equation}
V''(x) = \Big[\int f'''(x, \theta)\,\frac{\ddd  \theta}{2c} 
- 3\int f'(x, \theta)\,\frac{\ddd  \theta}{2c}\int f''(x, \theta)\,\frac{\ddd  \theta}{2c} + 2 \Big(\int f'(x, \theta)\,\frac{\ddd  \theta}{2c}\Big)^{3} \Big].
\end{equation}
With the functions 
$$\arccos_1: (-1,1) \to (0,\pi),\quad
\arccos_2: (-1,1) \to (\pi,2\pi),$$
we can perform the change of variable $u=\cos\theta$, i.e., $\theta = \arccos u$ and 
\begin{equation}
\ddd \theta = \frac{-\ddd u}{\sqrt{1-u^{2}}}.
\end{equation}
Here we get
\begin{align}
\int_0^{2\pi} (\cos\theta )^k  e^{x \cos \theta} \frac{\ddd  \theta}{2c} &=
\int_0^{\pi} (\cos\theta )^k  e^{x \cos \theta} \frac{\ddd  \theta}{2c} +
\int_\pi^{2\pi} (\cos\theta )^k  e^{x \cos \theta} \frac{\ddd  \theta}{2c}\\
&=-\int_{1}^{-1} u^k e^{x u} \frac{\ddd u}{c\sqrt{1 - u^2}}\nonumber\\
 &= \int_{-1}^1 u^k e^{x u} \frac{\ddd u}{c\sqrt{1 - u^2}}.\nonumber
\end{align}
where we have used $\arccos_1$ for the first integral and $\arccos_2$ for the second. Note that, when $k=0$,
\begin{equation}
\label{eq:kiszero}
\int_{-1}^1  e^{x u} \frac{\ddd u}{c\sqrt{1 - u^2}} = 1.
\end{equation}
With the change of measure
\begin{align}
\ddd\nu(u) = \frac{1}{c\sqrt{1-u^2}}\ddd u
\end{align}
we obtain
\begin{equation}
V''(x) = \Big[\int_{-1}^{1} u^3e^{xu}\,\ddd  \nu(u) 
- 3\int_{-1}^{1} ue^{xu}\,\ddd \nu(u)\int_{-1}^{1} u^2e^{xu}\,\ddd  \nu(u) + 2 \Big(\int_{-1}^{1} ue^{xu}\,\ddd  \nu(u)\Big)^{3} \Big]
\end{equation}
and defining $m = m(x) = \int_{-1}^{1} ue^{xu}\,\ddd  \nu(u)$ we get 
\begin{equation}
\label{eq:star}
V''(x) = \Big[\int_{-1}^{1} u^3e^{xu}\,\ddd \nu(u) 
- 3m\int_{-1}^{1} u^2e^{xu}\,\ddd \nu(u) + 2 m^{3} \Big].
\end{equation}
Note that, due to \eqref{eq:kiszero}, and since $\int_{-1}^{1} 3m^2 \,u\, e^{xu}\,\ddd \nu(u) = 3m^3$, \eqref{eq:star} equals
\begin{equation}
\int_{-1}^{1} (u-m)^3e^{xu}\,d \nu(u).
\end{equation}
We can check this by writing
\begin{align}
\int_{-1}^{1} (u-m)^3e^{xu}\,\ddd \nu(u) &= \int_{-1}^{1} (u^3-3mu^2 + 3m^2 u - m^3)e^{xu}\,\ddd \nu(u)\\
&= \Big[\int_{-1}^{1} u^3e^{xu}\,\ddd \nu(u) 
- 3m\int_{-1}^{1} u^2e^{xu}\,\ddd \nu(u) + 2 m^{3} \Big]. 
\end{align}
To complete the proof we state \cite[Lemma 4]{PP81}, suitably adapted. 
\begin{lemma}
Let $\nu$ be an even probability measure with support on $[-1, 1]$, and suppose that $\nu$ is absolutely continuous, i.e., $\ddd \nu(\sigma) = f(\sigma)\ddd \sigma$, with $f$ non-decreasing on $[0, 1]$. Then $\nu \in \mathscr{P}$, the class of all probability measures on $\mathbb{R}$ with compact support, is such that 
\begin{equation}
\int_{-1}^{1} \eee^{k\sigma}(m-\sigma)^{p} \ddd \nu (\sigma)\geq 0,
\end{equation}
where 
\begin{equation}
m = m(k) = \frac{\int \sigma \eee^{k\sigma}\ddd \nu(\sigma)}{\int \eee^{k\sigma}\ddd\nu(\sigma)},
\end{equation}
and $k = Jm + h$, where $J$ is the mean-field interaction strength and $h$ is the magnetic field strength of the spin system.
\end{lemma}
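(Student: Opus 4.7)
The statement as written does not specify $p$; I interpret it as claiming non-negativity for every positive integer $p$. The plan is to treat three regimes of $p$ separately. For $p$ even the integrand is pointwise non-negative, so the inequality holds trivially. For $p=1$ the left-hand side equals $Z(k)\bigl[m - m\bigr] = 0$ by the very definition $m = Z'(k)/Z(k)$, where $Z(k) := \int \eee^{k\sigma}\ddd\nu(\sigma)$. Hence the only non-trivial case, and the only one used in the concavity argument for $V$ in Appendix~\ref{app:concavity}, is odd $p\geq 3$; I will focus on $p=3$, the extension to larger odd $p$ being a matter of combinatorial bookkeeping.

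For $p=3$ I would first normalize by $Z(k)$ and pass to the tilted probability measure $\mu_{k}(\ddd\sigma) := Z(k)^{-1}\eee^{k\sigma}\ddd\nu(\sigma)$. The inequality becomes $\EE_{\mu_{k}}[(\sigma-m)^{3}]\leq 0$, i.e.\ the third central moment of the tilted distribution is non-positive. Since $m(k) = (\log Z)'(k)$, an equivalent formulation is that $k\mapsto m(k)$ is concave on $[0,\infty)$; by the evenness of $\nu$ one has $m(-k) = -m(k)$, so it suffices to prove this for $k\geq 0$ (equivalently $m\geq 0$).

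The natural approach is a duplicate-variable/reflection argument in the spirit of the Griffiths--Hurst--Sherman inequality. Take $X,Y$ independent with law $\mu_{k}$ and pass to $S := X+Y$, $D := X-Y$. The joint density of $(S,D)$ is proportional to $\eee^{kS} f((S+D)/2) f((S-D)/2)$, which is even in $D$ since $f$ is even. The elementary identity
\begin{equation*}
(X-m)^{3} + (Y-m)^{3} = \tfrac{1}{4}(S-2m)\bigl[(S-2m)^{2}+3D^{2}\bigr]
\end{equation*}
then reduces $2\,\EE_{\mu_{k}}[(\sigma-m)^{3}]$ to an integral of $(S-2m)[(S-2m)^{2}+3D^{2}]$ against the above density on the square $\{(s,d): |s\pm d|\leq 2\}$.

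The hard step, and where the U-shape hypothesis on $f$ is essential, is to conclude that this two-dimensional integral is $\leq 0$. Intuitively, the tilt $\eee^{kS}$ pushes $S$ towards large positive values, while a U-shaped $f$ means that the conditional density of $D$ given $S$ is larger precisely when $|S|$ is small, producing the required negative correlation between $S - 2m$ and $(S-2m)^{2}+3D^{2}$. To make this rigorous I would split the integration domain along $\{S>2m\}$ vs.\ $\{S\leq 2m\}$, reflect one half into the other via the substitution $S\mapsto 4m-S$, and use the monotonicity of $f$ on $[0,1]$ to compare the two contributions pointwise in $D$. Equivalently, once the problem has been put in the above duplicate-variable form, one may quote Lemma~4 of \cite{PP81} directly as a black box: that lemma is precisely the general GHS-type statement for U-shaped single-spin measures and yields the inequality in one line. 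The main obstacle is thus purely the pointwise comparison in $D$ after the $S$-reflection, which is genuinely where the hypothesis on $f$ is used.
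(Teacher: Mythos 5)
The paper does not actually prove this lemma: it is quoted verbatim (``suitably adapted'') from Pearce \cite[Lemma 4]{PP81}, and the surrounding text just applies it with $\sigma=u$, $k=x$, $p=3$. So you are attempting something the paper does not attempt, and the parts of your write-up that are concrete are correct and in the right spirit: the reduction to $\EE_{\mu_k}[(\sigma-m)^3]\le 0$ via the tilted measure, the observation that $p$ even is trivial and $p=1$ gives zero, the reformulation as concavity of $k\mapsto m(k)$, and the algebraic identity $(X-m)^3+(Y-m)^3=\tfrac14(S-2m)\bigl[(S-2m)^2+3D^2\bigr]$ are all right, and a duplicate-variable argument is indeed the standard route to GHS-type statements.

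However, the proposal does not close. The step you yourself flag as ``the hard step'' is left undone, and the reflection $S\mapsto 4m-S$ you sketch does not obviously give the required pointwise comparison: on $\{S>2m\}$ the factor $\eee^{kS}$ is \emph{larger} than its reflected counterpart $\eee^{k(4m-S)}$ (for $k\geq 0$), which is the wrong sign, and it is not clear that the $f$-factors $f\bigl(\tfrac{S\pm D}{2}\bigr)$ compensate, since for $|S|$ large and $|D|$ small the U-shape of $f$ makes them \emph{larger} as well. Some extra idea is needed, and without it the argument is a gap, not a proof. Moreover, the fallback you offer --- ``quote Lemma 4 of \cite{PP81} directly as a black box'' --- is circular, since the statement to be proved \emph{is} that lemma; invoking it gives nothing beyond what the paper already does. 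Finally, a small but real inaccuracy: ``by evenness $m(-k)=-m(k)$, so it suffices to prove this for $k\geq 0$'' is misleading, because under $k\mapsto -k$, $\sigma\mapsto-\sigma$ the quantity $\int\eee^{k\sigma}(m-\sigma)^3\ddd\nu$ changes sign, so the claimed inequality does \emph{not} hold for $k<0$; the lemma must be read with $k\geq 0$ implicit (which is the regime of application in Appendix~\ref{app:concavity}, where $k=x\in[0,\infty)$).
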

With the identification
\begin{equation}
\sigma = u, \quad k=x, \quad p=3
\end{equation}
and taking out a negative, we complete the proof. To get the strict inequality we note that the equality in the lemma does not hold for our choice of $\nu(u)$.

%%%%%%%%%%% REFERENCES %%%%%%%%%%%%%%%%%%%%%
\newpage

\end{document}